\numberwithin{equation}{section}
\theoremstyle{plain}
\newtheorem{thm}{Theorem}[section]
\newtheorem{lem}[thm]{Lemma}
\newtheorem{prop}[thm]{Proposition}
\newtheorem{cor}[thm]{Corollary}
\theoremstyle{definition}
\newtheorem{Def}[thm]{Definition}
\theoremstyle{remark}
\newtheorem{rem}[thm]{Remark}
\newcommand{\ep}{\epsilon}
\newcommand{\gsl}{\fsl{g}}
\DeclareMathAlphabet{\mathpzc}{OT1}{pzc}{m}{it}
\newcommand{\fsl}[1]{{\centernot{#1}}}  
\DeclareMathOperator{\tr}{tr}
\DeclareMathOperator{\supp}{supp}
\newcommand{\id}{\mathord{{\mathrm 1}\kern-0.27em{\mathrm I}}\kern0.35em}
\newcommand{\del}[1]{\partial_{#1}}
\newcommand{\delb}[1]{\bar{\partial}_{#1}}
\newcommand{\nablasl}[1]{\fsl{\nabla}{}_{#1}}
\DeclareMathOperator{\Ord}{O}
\newcommand{\oset}[2]{\overset{#1}{#2}{}}
\newcommand{\dsp}{\displaystyle}
\newcommand{\AND}{{\quad\text{and}\quad}}
\newcommand{\norm}[1]{\|#1\|}
\newcommand{\ipe}[2]{ ( #1 | #2 )}
\newcommand{\ip}[2]{ \langle #1 | #2 \rangle}
\newcommand{\Ac}{\mathcal{A}{}}
\newcommand{\Bc}{\mathcal{B}{}}
\newcommand{\Cc}{\mathcal{C}{}}
\newcommand{\Dc}{\mathcal{D}{}}
\newcommand{\Fc}{\mathcal{F}{}}
\newcommand{\fc}{\mathpzc{f}{}}
\newcommand{\Gc}{\mathcal{G}{}}
\newcommand{\gc}{\mathpzc{g}{}}
\newcommand{\Hc}{\mathcal{H}{}}
\newcommand{\hc}{\mathpzc{h}{}}
\newcommand{\Ic}{\mathcal{I}{}}
\newcommand{\Jc}{\mathcal{J}{}}
\newcommand{\Kc}{\mathcal{K}{}}
\newcommand{\pc}{\mathpzc{p}{}}
\newcommand{\Qc}{\mathcal{Q}{}}
\newcommand{\qc}{\mathpzc{q}{}}
\newcommand{\Rc}{\mathcal{R}{}}
\newcommand{\rc}{\mathpzc{r}{}}
\newcommand{\Sc}{\mathcal{S}{}}
\newcommand{\zc}{\mathpzc{z}{}}
\newcommand{\ab}{\bar{a}{}}
\newcommand{\bb}{\bar{b}{}}
\newcommand{\Cb}{\bar{C}{}}
\newcommand{\cb}{\bar{c}{}}
\newcommand{\db}{\bar{d}{}}
\newcommand{\eb}{\bar{e}{}}
\newcommand{\gb}{\bar{g}{}}
\newcommand{\Ib}{\bar{I}{}}
\newcommand{\Jb}{\bar{J}{}}
\newcommand{\Lb}{\bar{L}{}}
\newcommand{\Mb}{\bar{M}{}}
\newcommand{\rb}{\bar{r}{}}
\newcommand{\tb}{\bar{t}{}}
\newcommand{\ub}{\bar{u}{}}
\newcommand{\vb}{\bar{v}{}}
\newcommand{\wb}{\bar{w}{}}
\newcommand{\xb}{\bar{x}{}}
\newcommand{\Sigmab}{\bar{\Sigma}{}}
\newcommand{\nablab}{\bar{\nabla}{}}
\newcommand{\at}{\tilde{a}{}}
\newcommand{\bt}{\tilde{b}{}}
\newcommand{\ct}{\tilde{c}{}}
\newcommand{\ft}{\tilde{f}{}}
\newcommand{\gt}{\tilde{g}{}}
\newcommand{\Rt}{\tilde{R}{}}
\newcommand{\ut}{\tilde{u}{}}
\newcommand{\nablat}{\tilde{\nabla}{}}
\newcommand{\xit}{\tilde{\xi}{}}
\newcommand{\ah}{\hat{a}{}}
\newcommand{\xh}{\hat{x}{}}
\newcommand{\rf}{\mathfrak{r}{}}
\newcommand{\ev}{\mathbf{e}{}}
\newcommand{\Mbb}[1]{{\mathbb{M}_{#1\times #1}}{}}
\newcommand{\Pbb}{\mathbb{P}{}}
\newcommand{\Rbb}{\mathbb{R}{}}
\newcommand{\Tbb}{\mathbb{T}{}}
\newcommand{\Vbb}{\mathbb{V}{}}
\newcommand{\Wbb}{\mathbb{W}{}}
\newcommand{\Zbb}{\mathbb{Z}{}}
\newcommand{\Asc}{\mathscr{A}{}}
\newcommand{\Fsc}{\mathscr{F}{}}
\newcommand{\Gsc}{\mathscr{G}{}}
\newcommand{\Isc}{\mathscr{I}{}}
\newcommand{\Lsc}{\mathscr{L}{}}
\newcommand{\Ftt}{\mathtt{F}{}}
\newcommand{\ftt}{\mathtt{f}{}}
\newcommand{\Ntt}{\mathtt{N}{}}
\newcommand{\ptt}{\mathtt{p}{}}
\newcommand{\qtt}{\mathtt{q}{}}
\newcommand{\Vtt}{\mathtt{V}{}}
\begin{document}

\title[A Fuchsian viewpoint on the weak null condition]{A Fuchsian viewpoint on the weak null condition}

\author[T.A. Oliynyk]{Todd A. Oliynyk}
\address{School of Mathematical Sciences\\
9 Rainforest Walk\\
Monash University, VIC 3800\\ Australia}
\email{todd.oliynyk@monash.edu}

\author[J.A. Olvera-Santamar\'{i}a]{J. Arturo Olvera-Santamar\'{i}a}
\address{School of Mathematical Sciences\\
9 Rainforest Walk\\
Monash University, VIC 3800\\ Australia}
\email{arturo.olvera@monash.edu }

\begin{abstract}
We analyze systems of semilinear wave equations in $3+1$ dimensions whose associated asymptotic equation admit bounded solutions for suitably small
choices of initial data. Under this special case of the weak null condition, which we refer to as the \textit{bounded weak null condition}, we prove the existence of solutions to these systems of wave equations on neighborhoods of spatial infinity under a small initial data assumption. Existence is established using the Fuchsian
method. This method involves transforming the wave equations into a Fuchsian equation defined on a bounded spacetime region. The existence of solutions to the Fuchsian equation then follows from an application of
the existence theory developed in \cite{BOOS:2020}. This, in turn, yields, by construction, solutions to the original system of
wave equations on a neighborhood of spatial infinity.
\end{abstract}

\maketitle

\section{Introduction\label{intro}}
In this article, we establish global existence results for systems of semilinear wave equations in $3+1$ dimensions that
satisfy a weak null condition. Specifically, the class of semilinear wave equations that we consider are of the form\footnote{See Appendix \ref{indexing} for our indexing conventions.}
\begin{equation}  
\gb^{\alpha\beta}\nablab_\alpha \nablab_\beta \ub^K = \ab^{K\alpha\beta}_{IJ}\nablab_\alpha \ub^I \nablab_\beta \ub^J \label{Mbwave} 
\end{equation}
where the $u^I$, $1\leq I \leq N$, are a collection of scalar fields, the $\ab_{IJ}^{K}=\ab_{IJ}^{K\alpha\beta}\delb{\alpha}\otimes\delb{\beta}$, $1\leq I,J,K \leq N$, are prescribed smooth (2,0)-tensors fields on $\Rbb^4$, and $\nablab$ is the Levi-Civita connection of the Minkowski metric 
$\gb=\gb_{\alpha\beta}d\xb^\alpha\otimes d\xb^\beta$ on $\Rbb^4$.
We find it convenient to work throughout this article primarily in spherical coordinates
\begin{equation*}
(\xb^\mu) = (\xb^0,\xb^1,\xb^2,\xb^3)=(\tb,\rb,\theta,\phi)
\end{equation*}
in which the Minkowski metric is given by
\begin{equation} \label{gbdef}
\gb =  -d\tb\otimes d\tb + d\rb \otimes d\rb + \rb^2\gsl
\end{equation}
where
\begin{equation}\label{gsldef}
\gsl = d\theta \otimes d\theta + \sin^2(\theta) d\phi \otimes d\phi
\end{equation}
is the canonical metric on the $2$-sphere $\mathbb{S}^2$. 
For simplicity\footnote{This is certainly not necessary, and it is straightfoward to verify that all the results
of this article can be generalized to allow non-covariantly constant  tensors $\ab_{IJ}^{K}$ provided that they
satisfy suitable asymptotics.}, we assume for the remainder of the article that the tensor fields $\ab^K_{IJ}$ are covariantly constant, i.e.  $\nablab \ab_{IJ}^{K}=0$, which is equivalent to the condition that the components of $\ab_{IJ}^{K}$ in a Cartesian coordinate
system $(\xh^\mu)$ are constants, that is, $\ab_{IJ}^{K} = \ah_{IJ}^{K\alpha\beta}\hat{\partial}_{\alpha}\otimes\hat{\partial}_{\beta}$
for some set of constant coefficients $ \ah_{IJ}^{K\alpha\beta}$.

In order to define the weak null condition that we will consider in this article, we first introduce the out-going null one-form
$\Lb=-d\tb + d\rb$
and set
\begin{equation} \label{bbIJKdef}
\bb^K_{IJ} := \ab^{K\mu\nu}_{IJ}\Lb_\mu \Lb_\nu = \ab_{IJ}^{K00}-\ab_{IJ}^{K01}-\ab_{IJ}^{K10}+\ab_{IJ}^{K11}.
\end{equation}
As we show below, see \eqref{bbIJKformula}, the
$\bb^K_{IJ}$  define smooth functions on $\mathbb{S}^2$. We use these functions to define 
the \textit{asymptotic equation} associated
to \eqref{Mbwave} by
\begin{equation}\label{asympeqn}
(2-t)\del{t}\xi =  \frac{1}{t}Q(\xi)
\end{equation}
where $\xi=(\xi^K)$ and
\begin{equation}\label{Qmap}
Q(\xi) = (Q^K(\xi)):=(-2 \chi(\rho) \rho^m \bb^K_{IJ} \xi^I \xi^J). 
\end{equation}
In this equation, $t$ and $\rho$ are coordinates that arise from a compactification of a neighborhood of spatial infinity, see Section \ref{cylinder:sec}
and equation \eqref{rhodef} for details, while $\chi(\rho)$ is a smooth cut-off function. Furthermore, the time coordinate $t$ is chosen so that $0<t\leq 1$ and
and $t=0$ corresponds to future null-infinity. We remark that this type of equation was first introduced by H{\"o}rmander \cite{Hormander:1987,Hormander:1997}  to analyze the blow-up time for wave equations that do not satisfy the null condition of  Klainerman \cite{Klainerman:1980}, which in our notation is defined by the vanishing of the $\bb^K_{IJ}$.

The weak null condition, which was first introduced in \cite{LindbladRodnianski:2003}, is a growth condition on solutions of the asymptotic
equation, namely that solutions of \eqref{asympeqn} satisfy a bound of the form $|\xi(t)|\lesssim t^{-C\ep}$ for some fixed constant $C>0$ and  initial data at $t=1$ satisfying $|\xi(1)|\leq \ep\leq \ep_0$ for $\ep_0>0$ sufficiently small. It is still an open conjecture, even
in the semilinear setting, to determine if the weak null condition is enough to ensure the global existence of solutions under a suitable
small initial data assumption. In this article, we will consider the
following restricted weak null condition, which includes the classical null condition as a special case:

\begin{Def} \label{bwnc}
\noindent The asymptotic equation is said to satisfy the \textit{bounded weak null condition} if there exist constants $\Rc_0>0$ and $C>0$ such that solutions of the asymptotic initial value problem (IVP)
\begin{align}
(2-t)\del{t}\xi &= \frac{1}{t}Q(\xi), \label{asympprop1.1} \\
\xi|_{t=1} &= \mathring{\xi}, \label{asympprop1.2}
\end{align}
exist for $t\in (0,1]$ and are bounded by $\dsp \sup_{0<t\leq 1}|\xi(t)| \leq C$
for all initial data $\mathring{\xi}$ satisfying $|\mathring{\xi}| < \Rc_0$. 
\end{Def}

We remark here that Keir \cite{Keir:2019} has analyzed systems of quasilinear wave equations
with quadratic semilinear terms under a slightly stronger assumption that requires, in addition to the boundedness assumption, a stability condition on solutions to the asymptotic equation.
Under these conditions, Kerr was able to establish, using a generalization of the p-weighted energy method of Dafermos and
Rodnianski \cite{DafermosRodnianski:2010} that was developed in \cite{Keir:2018},  the
global existence of solutions to the future of a truncated outgoing characteristic hypersurface under a suitable small initial data
assumption. In particular, his results imply that semilinear systems of wave equations of the form \eqref{Mbwave} whose asymptotic
equations satisfy his boundeness and stability condition admit solution on spacetime regions of the form
$\{\,(\tb,\rb)\, |\, \tb>\max\{0,\rb-\rb_0\}, \rb\geq 0\, \} \times \mathbb{S}^2$
for suitably small initial data that is prescribed on the truncated null-cone
$\{\,(\tb,\rb)\,|\, \tb=\max\{0,\rb-\rb_0\}, \rb\geq 0\, \}\times\mathbb{S}^2$.

In light of Kerr's results, we will restrict our attention to establishing the existence of solutions to \eqref{Mbwave}
on neighborhoods of spatial infinity of the form  
\begin{equation} \label{Mbr0def}
\Mb_{r_0} = \bigl\{ \,(\tb,\rb)\, \bigl| \, 0<\tb < \rb - 1/ r_0, \; 1/r_0< \rb < \infty\, \bigr\} \times \mathbb{S}^2
\end{equation}
where $r_0>0$ is a positive constant and initial data is prescribed on the hypersurface
\begin{equation}  \label{Sigmabr0def}
\Sigmab_{r_0} = \bigl\{ \,(\tb,\rb)\, \bigl| \, \tb=0,\; 1/r_0 < \rb < \infty\, \bigr\} \times \mathbb{S}^2.
\end{equation}
This will compliment Kerr's results, at least in the semilinear setting, by establishing the existence of solutions on regions
not covered by his existence results. More important, in our opinion, is that we establish these global existence results
using a new method, called the \textit{Fuchsian method}, that we
believe will be prove useful, more generally, for the analysis of nonlinear wave equations. 
Informally, the main existence result of this article, see Corollary \ref{existcor} for the precise version, can be stated as follows:

\begin{thm} \label{informalthm}
Suppose $\zc>0$ and the asymptotic equation \eqref{asympeqn} associated to \eqref{Mbwave} satisfies the bounded weak null condition.
Then there exists a $r_0>0$ such that for suitably small initial data $\vb^K$, $\wb^K$ defined on $\Sigmab_{r_0}$, which does not have to be compactly supported, there exists a unique classical solution
$\ub^K\in C^2(\Mb_{r_0})$ to the initial value problem
\begin{align*}
\gb^{\alpha\beta}\nablab_\alpha \nablab_\beta \ub^K &= \ab^{K\alpha\beta}_{IJ}\nablab_\alpha \ub^I \nablab_\beta \ub^J
\quad \text{in $\Mb_{r_0}$,}\\
(\ub^K, \del{\tb}\ub^K) &= (\vb^K,\wb^K) \hspace{1.5cm} \text{in $\Sigmab_{r_0}$,}
\end{align*}
that satisfies the pointwise bound
\begin{equation*}
|\ub^K|\lesssim \frac{\rb}{\rb^2-\tb^2}\biggl(1-\frac{\tb}{\rb}\biggr)^{1-\zc} \quad \text{in $\Mb_{r_0}$.}
\end{equation*}
\end{thm}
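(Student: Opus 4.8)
The plan is to transform the system of semilinear wave equations \eqref{Mbwave} on the neighborhood of spatial infinity $\Mb_{r_0}$ into a first-order Fuchsian system on a bounded spacetime cylinder and then invoke the existence theory of \cite{BOOS:2020}. The starting point is a compactification: introduce new coordinates adapted to spatial infinity, replacing $(\tb,\rb)$ by a time coordinate $t$ and a radial coordinate $\rho$ (via the change of variables recorded in \eqref{rhodef} and Section \ref{cylinder:sec}), so that the region $\Mb_{r_0}$ becomes a bounded set in which $t=0$ corresponds to future null infinity and $t=1$ to the initial hypersurface $\Sigmab_{r_0}$. Under this change of variables, and after an appropriate rescaling $\ub^K \rightsquigarrow$ rescaled unknowns (roughly $\rb\,\ub^K$, which is the natural radiation-field normalization in $3+1$ dimensions), the wave operator $\gb^{\alpha\beta}\nablab_\alpha\nablab_\beta$ acquires a singular factor in $t$ of exactly Fuchsian type, with the principal singular part governed precisely by the asymptotic operator $(2-t)\del{t}$ appearing in \eqref{asympeqn}, while the quadratic nonlinearity $\ab^{K\alpha\beta}_{IJ}\nablab_\alpha\ub^I\nablab_\beta\ub^J$ reduces, to leading order near $t=0$, to the term $\frac1t Q(\xi)$ built from the null components $\bb^K_{IJ}$ in \eqref{bbIJKdef}--\eqref{Qmap}; the remaining pieces are lower order and will be treated as controlled error/source terms.

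**Next I would** set up the Fuchsian system in the precise form required by \cite{BOOS:2020}: a symmetric-hyperbolic first-order system $\Av^0\del{t}U = \frac1t \mathfrak{B} U + \Av^i\del{i}U + F(t,x,U)$ (schematically), where $U$ collects the rescaled fields together with their first derivatives (angular and radial), the matrices $\Av^0,\Av^i$ are symmetric with $\Av^0$ positive definite, and the singular coefficient $\mathfrak{B}/t$ satisfies the structural positivity/commutation hypotheses of the reference. The crucial input is that the bounded weak null condition (Definition \ref{bwnc}) furnishes, for small data, a bounded solution $\xi(t)$ of the asymptotic IVP \eqref{asympprop1.1}--\eqref{asympprop1.2}; this bounded $\xi$ is used to construct a \emph{background} or \emph{leading-order ansatz} around which the genuine solution is sought as a perturbation $U = U_{\mathrm{asymp}} + V$, so that $V$ solves a Fuchsian system whose singular term has the sign/decay properties needed for the a priori energy estimates of \cite{BOOS:2020} to close. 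The small-data hypothesis on $(\vb^K,\wb^K)$ on $\Sigmab_{r_0}$ is converted into a small-norm hypothesis on the Cauchy data for $V$ at $t=1$, and choosing $r_0$ large corresponds to localizing near spatial infinity so that the cut-off $\chi(\rho)$ and the smallness are compatible.

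**Then** the existence theorem of \cite{BOOS:2020} applies directly to yield a solution $V$ on the full time interval $(0,1]$ in a suitable weighted Sobolev space, together with the decay rate $|V(t)|\lesssim t^{-\zc}$ (or the appropriate power), the loss $\zc>0$ being exactly the "$C\ep$" slack built into the weak null condition. Undoing the compactification and the rescaling, one recovers $\ub^K\in C^2(\Mb_{r_0})$ solving the original IVP; the weighted bound on $V$ translates, after restoring $\rb$ and the null coordinate $1-\tb/\rb \sim t$, into the claimed pointwise estimate $|\ub^K|\lesssim \frac{\rb}{\rb^2-\tb^2}\bigl(1-\frac{\tb}{\rb}\bigr)^{1-\zc}$. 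Uniqueness in $C^2(\Mb_{r_0})$ follows from the uniqueness part of the Fuchsian theory together with finite-speed-of-propagation / standard well-posedness for the wave equation away from $t=0$, matched at the boundary.

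**The main obstacle** I anticipate is not the abstract application of \cite{BOOS:2020} but rather the verification that the reduced system genuinely has the required Fuchsian structure with the correct \emph{singular coefficient matrix} $\mathfrak{B}$: one must check that the leading singular term is governed by $(2-t)\del{t}$ acting diagonally and that the quadratic nonlinearity's singular part is \emph{exactly} $\frac1t Q(\xi)$ with no uncontrolled $\frac1t$ contributions surviving from the other null components of $\ab^{K\alpha\beta}_{IJ}$ (i.e. the "good components" must decay better after the rescaling). Establishing that the $\bb^K_{IJ}$ descend to smooth functions on $\mathbb S^2$ (the promised \eqref{bbIJKformula}) and that the error terms are integrable in $t$ down to $t=0$ in the relevant weighted norms is where the real work lies; this is also where the choice of rescaling weight and the exponent $m$ in $\rho^m$ in \eqref{Qmap} must be pinned down so that the whole scheme is consistent with the positivity hypotheses on $\mathfrak{B}$ needed for the energy estimates to absorb the singular term.
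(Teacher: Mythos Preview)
Your proposal captures the broad outline correctly---compactify via the cylinder at spatial infinity, pass to the conformal wave equation, reduce to a first-order symmetric hyperbolic system with a $1/t$ singularity, and invoke the Fuchsian existence theory of \cite{BOOS:2020}. This is indeed the skeleton of the paper's argument. However, your handling of the bounded weak null condition differs from the paper in a way that matters, and you omit an auxiliary construction that the paper needs in order to fit the hypotheses of \cite{BOOS:2020}.

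The key discrepancy is your additive ansatz $U=U_{\mathrm{asymp}}+V$. If you subtract a fixed solution $\xi(t,y)$ of the asymptotic equation, the residual equation for $V$ still carries a singular linear term of the form $\tfrac{1}{t}DQ(\xi)\,V$, and $DQ(\xi)$ has no sign or structure that would let the energy estimates of \cite{BOOS:2020} absorb it. The paper instead uses the \emph{flow} of the asymptotic equation as a nonlinear, time-dependent change of variables: it writes $V_0(t,y)=\Fsc(t,1,y,Y(t,y))$ and treats $Y$ as the new unknown. Because $\Fsc$ solves the asymptotic IVP exactly, the chain rule cancels the entire $\tfrac{1}{t}Q$ term, and $Y$ obeys a \emph{non-singular} equation $(2-t)\partial_t Y=\Lsc\Gsc$ with $\Lsc=(D_\xi\Fsc)^{-1}$. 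The bounded weak null condition then enters only through bounds on $\Fsc$ and $(D_\xi\Fsc)^{-1}$ (Proposition~\ref{asympprop}), which is where the $t^{-\ep}$ losses and the eventual $\zc$-loss come from. Your additive scheme does not achieve this cancellation.

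The second omission is that the raw first-order system for $V^K$ is \emph{not} yet in a form to which \cite{BOOS:2020} applies. The paper augments it in two ways: (i) it differentiates once spatially and rescales by $t^{\kappa}$ to produce variables $W^K_j=t^{\kappa}\Dc_j V^K$; (ii) it applies a projection $\Pbb$ and rescales by $t^{-\nu}$ to isolate the ``good'' components $X^K=t^{-\nu}\Pbb V^K$. Only the enlarged vector $Z=(W^K_j,X^K,Y)$ satisfies a Fuchsian system with the positivity and commutation properties required by \cite{BOOS:2020}; in particular, the singular part of the quadratic nonlinearity survives only in the $W$-block and is controlled there via the estimates \eqref{Qcbnd2}--\eqref{Qcbnd3}, which exploit the flow-based $L^\infty$ bound on $V_0$. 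Your sketch, which passes directly from a first-order reduction to an application of \cite{BOOS:2020}, skips this augmentation and would not close as stated.
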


\noindent We note that further $L^2$ and pointwise bounds for $\ub^K$ and its derivatives are easily determined from Corollary \ref{existcor}.

\subsection{Semilinear wave equations satisfying the bounded weak null condition}
In \cite{Keir:2019}, Kerr showed that systems of wave equations of the form \eqref{Mbwave} with 
\begin{equation} \label{abmodel}
\ab_{IJ}^{K\alpha\beta}= \Ib^{KL}\Cb_{LIJ}\delta^\alpha_0\delta^\beta_0,
\end{equation}
where $\Ib^{KL}$ is a constant, positive definite, symmetric matrix and the $\Cb_{LIJ}$ are any constants satisfying
\begin{equation}\label{Cbantisym}
\Cb_{LIJ}=-\Cb_{ILJ},
\end{equation} 
have associated asymptotic equations that satisfy the bounded weak null condition. This is, in fact, easy to verify
since the choice \eqref{abmodel} leads, by \eqref{bbIJKdef}-\eqref{Qmap}, to the associated asymptotic
equation
\begin{equation} \label{asymeqnmodel}
(2-t)\del{t}\xi^K = -\frac{2}{t}\chi(\rho)\rho^m \Ib^{KL}\Cb_{LIJ}\xi^I \xi^J.
\end{equation} 
Introducing the inner-product $\ipe{\xi}{\eta} = \check{\Ib}{}_{IJ}\xi^I \eta^J$,
where $(\check{\Ib}{}_{IJ})=(\Ib^{IJ})^{-1}$, and contracting \eqref{asymeqnmodel} with $\check{\Ib}{}_{LK}\xi^L$, we get
\begin{equation} \label{asympenergy}
(2-t)\ipe{\xi}{\del{t}\xi} = -\frac{2}{t}\chi(\rho)\rho^m \check{\Ib}{}_{LK}\Ib^{KM}\Cb_{MIJ}\xi^L\xi^I \xi^J=
 -\frac{2}{t}\chi(\rho)\rho^m \Cb_{LIJ}\xi^L\xi^I \xi^J \oset{\eqref{Cbantisym}}= 0.
\end{equation}
But this implies $\del{t}(\ipe{\xi}{\xi})=0$, and so, we conclude that any solution of the asymptotic IVP \eqref{asympprop1.1} -\eqref{asympprop1.1} exists
for all $t\in (0,1]$ and satisfies $\ipe{\xi(t)}{\xi(t)}=\ipe{\mathring{\xi}}{\mathring{\xi}}$.
Letting $|\cdot|$ denote the Euclidean norm, we then have that 
$\frac{1}{\sqrt{C}}|\cdot| \leq \sqrt{\ipe{\cdot}{\cdot}} \leq\sqrt{C}  |\cdot|$ for some constant $C>0$, and consequently, 
by the above inequality, we arrive at the bound
$\sup_{0<t\leq 1}|\xi(t)|  \leq C  |\mathring{\xi}|$, which verifies that the bounded weak null condition
is fulfilled.

The calculation \eqref{asympenergy} also shows that this class of semilinear equations satisfies the structural
condition from \cite{Katayama_et_al:2015} called \textit{Condition H}. Because of this,
the global existence results established in from \cite{Katayama_et_al:2015} apply and yield the existence of global solutions
to \eqref{Mbwave} on the region $\tb>0$ for suitably small initial data with compact support. We further note that due
to the compact support of the initial data, the results of \cite{Katayama_et_al:2015} can, in fact, be deduced as
a special case of the global existence theory developed in \cite{Keir:2019}, but do not apply to the situation we are considering
in this article because we allow for non-compact initial data in addition to a less restrictive weak null condition.

\subsection{Prior and related works}
Early global existence results for nonlinear wave equations in $3+1$ dimensions that violate the null condition but satisfy the weak null condition were established for quasilinear wave equations in \cite{Alinhac:2003,Lindblad:2008}, systems of semilinear wave equations
in \cite{Alinhac:2006}, and the Einstein equations in wave coordinates in \cite{LindbladRonianski:2005,LindbladRodnianski:2010}. 
More recent results can be found in \cite{Katayama_et_al:2015}, which we discussed above, for semilinear equations, and in the articles  \cite{Bingbing_et_al:2015,DengPusateri:2018,HidanoYokoyama:2018} and, as we discussed
above, 
in \cite{Keir:2018,Keir:2019} for quasilinear equations.

\subsection{The Fuchsian method}
Singular systems of hyperbolic equations that can be expressed in the form
\begin{equation}
B^0(t,u)\del{t}u + B^i(t,u)\nabla_{i} u  = \frac{1}{t}\Bc(t,u) u + F(t,u) \label{symivp}
\end{equation}
are said to be \textit{Fuchsian}.  Traditionally, these systems have been viewed as \emph{singular initial value problems} (SIVP) where \textit{asymptotic data} is prescribed at
the singular time $t=0$ and then \eqref{symivp} is used to evolve the asymptotic data \emph{away from the singular time} to construct solutions on time intervals $t\in (0,T_0]$ for some, possibly small, $T_0>0$. The SIVP for
Fuchsian equations has been studied by many mathematicians and has found a wide array of applications, for example, see
\cite{andersson2001,choquet-bruhat2006,choquet-bruhat2004,damour2002,heinzle2012,isenberg1999,isenberg2002,kichenassamy1998,OliynykKunzle:2002a,OliynykKunzle:2002b,Rendall:2004} for applications in the analytic setting and  \cite{ames2013a,ames2013b,ames2017,beyer2010b,beyer2011,beyer2017,claudel1998a,kichenassamy2007k,rendall2000,rendall2004a,stahl2002}
in the class of Sobolev regularity. 

While the SIVP approach for establishing the existence of solutions to \eqref{symivp} is useful for certain applications, there are many situations where
the \textit{global initial value problem} (GIVP) for the Fuchsian system \eqref{symivp} is the relevant problem to solve. In this case, initial data is prescribed at some finite time $t=T_0>0$, and the problem becomes
to establish the existence of solutions to \eqref{symivp} all the way up to the singular time at $t=0$, that is, for $t\in (0,T_0]$. 
The flavour of this problem is that of a global existence problem and the study of such problems was initiated by the first author in  \cite{Oliynyk:CMP_2016}. In that article, existence results were established for the Fuchsian GIVP, which were 
then used to deduce the
future nonlinear stability of perturbations of 
Friedmann-Lema\^{i}tre-Robertson-Walker (FLRW)
solutions to the Einstein-Euler equations with a positive cosmological constant and a linear equation of state $p=K\rho$ for $0<K\leq 1/3$. 
This result represents the first instance  of a new method for establishing the global existence
of solutions to systems of hyperbolic equations that we now refer to as the \emph{Fuchsian method}. Specifically, the goal of the Fuchsian method
is to transform a given system of hyperbolic equations into a GIVP for a suitable Fuchsian system, and then to
deduce the existence of solutions to the Fuchsian GIVP from general existence theorems such as those established in \cite{Oliynyk:CMP_2016}.

The advantage of the Fuchsian method is that solving the Fuchsian GIVP is technically much simpler compared to establishing global
existence results for the original system of hyperbolic equations. The most difficult part of applying the 
Fuchsian method is finding a suitable set of
variables and coordinates needed to bring the original system of hyperbolic equations into the required Fuchsian form, and this 
problem is typically more geometric-algebraic than analytic.
In recent years, the GIVP existence theory for Fuchsian systems has been further developed
in the articles \cite{LiuOliynyk:2018a,LiuOliynyk:2018b,BOOS:2020,FOW:2020}, and these existence results and those from
\cite{Oliynyk:CMP_2016} have been used to deduce the global existence of solutions for a number
of different systems of hyperbolic equations in the articles 
\cite{BOOS:2020,FOW:2020,LeFlochWei:2020,LiuOliynyk:2018b,LiuOliynyk:2018a,LiuWei:2019,Oliynyk:2020,Wei:2018}.

\subsection{Outlook and future work}
The results of this article can be generalized and extended in a number of ways, which we describe briefly here. First, 
the hierarchical weak null
condition defined in \cite{Keir:2018}, which is general enough to encompass all known results with the exception
of those from \cite{Katayama_et_al:2015,Keir:2019} and this article,  can be easily handled using the Fuchsian method. Indeed, by
introducing a re-scaling of the form $\Vtt^K = t^{\mu_K}V^K$ for a suitable choices of
the constants $\mu_K$, it is straightforward, assuming the hierarchical weak null condition, to bring the system \eqref{MwaveH}
into a Fuchsian form in terms of the variables $\Vtt^K$ that satisfies all the hypotheses of Theorem 3.8 of \cite{BOOS:2020}. An application of this theorem
then yields the existence
of solutions to the semilinear system of wave equations \eqref{Mbwave} on $\Mb_{r_0}$ for suitable initial data specified on $\Sigmab_{r_0}$. We further note that all of the results of this article also hold in a neighborhood of timelike infinity. This follows from replacing the cylinder
at spatial infinity construction from Section \ref{cylinder:sec} by an analogous cylinder at temporal infinity construction. 
With this change, the arguments in this article go through in a similar fashion. In particular, this type of result
in conjunction with the results of the current article can be combined to establish a global existence result for
\eqref{Mbwave} on the regions $\tb>0$ for initial data specified on the constant time hyperspace $\tb=0$ that, importantly, does
not have to be compactly supported. We will report on these extensions
in a separate article.

We further note that systems of quasilinear wave equations can also be handled via the Fuchsian method. However, this
requires an extension of the
Fuchsian GIVP existence theory from \cite{BOOS:2020} to allow for spatial manifolds with boundaries and the use of boundary weighted Sobolev spaces. 
This work will be reported on in a separate article that is currently in preparation. 
Finally, we note these types of weighted results are also of interest in the semilinear setting
because they allow for for more general choices of initial data.

\subsection{Overview}
To prove the main result of this article, we use the Fuchsian method. This involves a sequence of steps where 
we transform the wave equation 
\eqref{Mbwave} on the non-compact domain $\Mb_{r_0}$ into a Fuchsian equation on a compact
domain that is in a form that allows us to directly apply the Fuchsian GIVP existence theory from \cite{BOOS:2020}, and 
thereby, establish the existence of solutions on $\Mb_{r_0}$ to semilinear wave equations satisfying the bounded weak null condition.

The derivation of the Fuchsian equation begins in Section \ref{cylinder:sec} where we map the domain $\Mb$, see
\eqref{Mbdef}, onto 
the \textit{cylinder at spatial infinity} 
$(0,2)\times (0,\infty) \times \mathbb{S}^2$, which compactifies the outgoing null-rays. 
We 
then use this mapping in Section \ref{conformal:sec} to push-forward the wave equation \eqref{Mbwave} on $\Mb_{r_0}$.
This yields the conformal wave equation \eqref{MwaveA} (see also \eqref{MwaveB}) defined on the manifold with boundary $M_{r_0}$ (see \eqref{Mr0def}) whose closure is now compact. We then proceed, in
Section \ref{firstorder:sec}, to write
the conformal wave equation in the first order form \eqref{MwaveG} using the variables $V^K=(V^K_\Ic)$ defined by \eqref{V0Kdef}
and \eqref{V1def}. This choice of variables is motivated by the structure of the most singular terms appearing in the conformal wave equation (see \eqref{fKexp}) and the requirement that the resulting first order system be symmetric hyperbolic.
The first order equation \eqref{MwaveG} is then replaced, in Section \ref{extendedsec}, by the extended system
\eqref{MwaveI}. The point of the extended system is twofold. First, it is defined on an extended spacetime region of the form $(0,1)\times \Sc$ where $\Sc$ is now a closed manifold, which, unlike $M_{r_0}$ is needed to apply the existence theory developed
in \cite{BOOS:2020}, and second, its solutions yield solutions to \eqref{MwaveG} on $M_{r_0} \subset (0,1)\times \Sc$ that
are independent of the particular form of the initial data on the ``unphysical'' part of the initial hypersurface $\{1\}\times \Sc$
given by $\{1\}\times \Sc \setminus \Sigma_{r_0}$. Here, $\Sigma_{r_0}$ is the spatial hypersurface where the initial data is prescribed. 
The upshot of this is that we lose nothing by working with the extended
system \eqref{MwaveI} rather than the first order form of the conformal wave equation given by \eqref{MwaveG}. Next, we
differentiate the extended system \eqref{MwaveI} to obtain the system \eqref{MwaveK}, which can be viewed as an
evolution equation for the variables $W^K_{j}=t^{\kappa}(\Dc_j V^K)$, and we use the flow of
the asymptotic equation \eqref{asympeqn} to change variables from $V_0$ to a new variable $Y$ defined by \eqref{V0def}-\eqref{Ydef}. The point of this change of variables is that it
removes the most singular term from the ``time'' component of the extended system \eqref{MwaveI}, which results
in the evolution equation \eqref{MwaveM}. In Section \ref{complete:sec}, we then apply the projector $\Pbb$ to the extended system \eqref{MwaveI}
to obtain an equation for $X^K=t^{-\nu}\Pbb V^K$ given by \eqref{MwaveN}, and we combine the three systems 
\eqref{MwaveI}, \eqref{MwaveK} and \eqref{MwaveN} into the single Fuchsian system \eqref{MwaveO} to obtain
an evolution system for $Z=(W^K_j, X^K,Y^K)$. It is then shown in Section \ref{coefprops} that, under the flow
assumptions from Section \ref{Asymptoticflowassump}, the Fuchsian system \eqref{MwaveO} satisfies, for a suitable choice of the 
parameters $\kappa,\nu$, all the assumptions needed to apply the Fuchsian GIVP existence theory from \cite{BOOS:2020}. Applying Theorem 3.8. from \cite{BOOS:2020} then yields the GIVP result for the Fuchsian system
\eqref{MwaveO} that is stated in Theorem \ref{existthm}. This, in turn, yields, by construction, a small initial data
global existence result for the original system of wave equations \eqref{Mbwave}. Finally, we easily derive Corollary \ref{existcor} from Theorem \ref{existthm} using Proposition \ref{asympprop}, which is the main result of the article and is stated informally as
Theorem \ref{informalthm} in the Introduction.

\section{Conformal wave equations near spatial infinity} 
\subsection{The cylinder at spatial infinity\label{cylinder:sec}}
The first step in transforming the system of semilinear wave equations \eqref{Mbwave} into 
Fuchsian form is
to compactify the neighborhoods of spatial infinity defined by \eqref{Mbr0def}. To this  
end, we let
\begin{equation} \label{Mbdef}
\Mb = \{ \, (\tb,\rb) \in (-\infty,\infty)\times (0,\infty) \: |\: -\tb^2+\rb^2 > 0 \, \}\times \mathbb{S}^2
\end{equation} 
and follow \cite{FrauendienerHennig:2014}, see also \cite[\S 4.1.1.]{BOOS:2020}, by mapping $\Mb$ to
\begin{equation*}
M = (0,2)\times (0,\infty) \times \mathbb{S}^2
\end{equation*}
using the diffeomorphism
\begin{equation} \label{psibdef}
\psi \: :\: \Mb \longrightarrow M \: : \: (\xb^\mu) = (\tb,\rb,\theta,\phi) \longmapsto (x^\mu) = \biggl(1-\frac{\tb}{\rb},\frac{\rb}{-\tb^2+\rb^2},\theta,\phi\biggr),
\end{equation}
where we label the coordinates on $M$ by
\begin{equation*}
(x^\mu) = (x^0,x^1,x^2,x^3) = (t,r,\theta,\phi).
\end{equation*}
The inverse of the mapping \eqref{psibdef} is readily obtained by solving
\begin{equation} \label{tbrb2tr}
t =1-\frac{\tb}{\rb}, \quad r =\frac{\rb}{-\tb^2+\rb^2},
\end{equation}
for $(\tb,\rb)$ to get
\begin{equation} \label{tr2tbrb}
\tb =\frac{1-t}{rt(2-t)}, \quad \rb =\frac{1}{tr(2-t)}.
\end{equation}

The importance of the diffeomorphism \eqref{psibdef} is that it defines a compactification of null-rays in $\Mb$. Decomposing the boundary of $M$ as
\begin{equation*}
\del{}M = \Isc^+ \cup i^0 \cup \Isc^{-},
\end{equation*}
where
\begin{equation*}
\Isc^{+} = \{0\}\times (0,\infty) \times \mathbb{S}^2, \quad  \Isc^{-} = \{2\}\times (0,\infty) \times \mathbb{S}^2 \AND i^0 = [0,2]\times \{0\} \times \mathbb{S}^2,
\end{equation*}
the boundary components $\Isc^{\pm}$ correspond to portions of ($+$) future and ($-$) past null-infinity, respectively, while 
$i^0$ corresponds to spatial infinity.
Furthermore, the  spacelike hypersurface 
$\{1\} \times (0,\infty) \times \mathbb{S}^2$ in $M$
corresponds to the constant time hypersurface $\tb=0$ 
in Minkowski spacetime. 

\begin{rem} The method used above to obtain the Lorentzian manifold $(M,g)$ from $(\Mb,\gb)$
is an example of the \textit{cylinder at spatial infinity} construction that was first introduced by Friedrich in \cite{Friedrich:JGP_1998} as a tool to
analyze the behavior of his conformal  version of the Einstein field equations, see \cite{Friedrich1981,Friedrich:1986,Friedrich:1991}, near spatial infinity. For further applications of this construction to linear wave and spin-2 equations on Minkowski spacetime, see the articles \cite{Beyer_et_al:2014,DoulisFrauendiener:2013,FrauendienerHennig:2014,Friedrich:2002,MacedoKroon:2018}.
\end{rem}

\subsection{Expansion formulas for the tensor components $\ab_{IJ}^{K\alpha\beta}$}
Before proceeding with the transformation to Fuchsian form, we  first derive an expansion formula for the tensor components $\ab_{IJ}^{K\alpha\beta}$ that will
play an important role in the calculations that follow. To start, we compute the Jacobian matrix
\begin{equation} \label{Jbdef}
(\Jb^\alpha_\mu) =\begin{pmatrix}
 1 & 0 & 0 & 0 \\
 0 & \sin (\theta ) \cos (\phi ) & \sin (\theta ) \sin (\phi ) & \cos (\theta ) \\
 0 & \frac{\cos (\theta ) \cos (\phi )}{\rb} & \frac{\cos (\theta ) \sin (\phi )}{\rb} & -\frac{\sin (\theta
   )}{\rb} \\
 0 & -\frac{\csc (\theta ) \sin (\phi )}{\rb} & \frac{\csc (\theta ) \cos (\phi )}{\rb} & 0
\end{pmatrix}
 \end{equation}
from the  change of variables
$(\xh^\mu)=(\tb,\rb\cos(\phi)\sin(\theta),\rb\sin(\phi)\sin(\theta),\rb\cos(\theta))$
from spherical to Cartesian coordinates. Using this and the tensorial transformation law
\begin{equation} \label{abcomponents}
\ab_{IJ}^{K\alpha\beta} = \Jb^\alpha_\mu \ah_{IJ}^{K\mu\nu} \Jb^\beta_\nu,
\end{equation}
we can expand the components \eqref{abcomponents} in powers of $\rb$ as
\begin{equation} \label{abexp}
\ab_{IJ}^{K\alpha\beta} = \frac{1}{\rb^2}\eb_{IJ}^{K\alpha\beta}+
\frac{1}{\rb}\db_{IJ}^{K\alpha\beta}+ \cb_{IJ}^{K\alpha\beta}
\end{equation}
where the expansions coefficients can be used to define the following geometric objects (see Appendix \ref{indexing} for
our indexing conventions) on $\mathbb{S}^2$: 
\begin{enumerate}
\item[(a)] smooth functions
$\eb_{IJ}^{K \pc \qc }$,  $\db_{IJ}^{K \pc \qc}$ and $\cb_{IJ}^{K \pc \qc}$,
\item[(b)] smooth vector fields 
$\eb_{IJ}^{K \qc \Lambda}$, $\eb_{IJ}^{K \Lambda\qc}$,  $\db_{IJ}^{K \qc \Lambda}$,
$\db_{IJ}^{K \Lambda\qc}$,  $\cb_{IJ}^{K \qc \Lambda}$, and $\cb_{IJ}^{K \Lambda\qc}$,
\item[(c)] and smooth (2,0)-tensor fields  $\eb_{IJ}^{K \Lambda \Sigma }$,  $\db_{IJ}^{K  \Lambda \Sigma}$ and $\cb_{IJ}^{K  \Lambda \Sigma}$.
\end{enumerate}

The only terms of the expansion \eqref{abexp} that we will need to consider in any detail are the $\cb_{IJ}^{K\alpha\beta}$. Now, it can be easily verified that the
\textit{non-vanishing}  $\cb_{IJ}^{K\alpha\beta}$ are given by 
\begin{align}
\cb_{IJ}^{K00} &= \ah_{IJ}^{K00} , \label{cb00} \\
\cb_{IJ}^{K01} &= \sin (\theta ) (\ah_{IJ}^{K01}  \cos (\phi )+\ah_{IJ}^{K02}  \sin
   (\phi ))+\ah_{IJ}^{K03}  \cos (\theta ), \label{cb01} \\
\cb_{IJ}^{K10} &= \sin (\theta ) (\ah_{IJ}^{K10} \cos (\phi )+\ah_{IJ}^{K20}  \sin (\phi
   ))+\ah_{IJ}^{K30}  \cos (\theta ) \label{cb10}
\intertext{and}
\cb_{IJ}^{K11} &=\sin ^2(\theta ) \left(\ah_{IJ}^{K11}  \cos ^2(\phi
   )+(\ah_{IJ}^{K12} +\ah_{IJ}^{K21}) \sin (\phi ) \cos (\phi )+\ah_{IJ}^{K22} \sin
   ^2(\phi )\right) \notag \\
   &\qquad +\sin (\theta ) \cos (\theta ) ((\ah_{IJ}^{K13}+\ah_{IJ}^{K31}) \cos (\phi
   )+(\ah_{IJ}^{K23}+\ah_{IJ}^{K32}) \sin (\phi ))+\ah_{IJ}^{K33} \cos ^2(\theta ). \label{cb11} 
\end{align}
Furthermore, with the help of \eqref{Jbdef} and \eqref{abcomponents}, we find via a straightforward calculation that the $\bb^K_{IJ}$, which are defined by \eqref{bbIJKdef}, can be expressed as
 \begin{align} \label{bbIJKformula}
 \bb^K_{IJ}&=\ah_{IJ}^{K00}-\sin (\theta ) (\ah_{IJ}^{K01} \cos (\phi )+\ah_{IJ}^{K02} \sin (\phi ))-\ah_{IJ}^{K03} \cos (\theta )-\sin (\theta )
   (\ah_{IJ}^{K10} \cos (\phi )+\ah_{IJ}^{K20} \sin (\phi ))\notag \\
   &\qquad +\sin ^2(\theta ) \left(\ah_{IJ}^{K11} \cos ^2(\phi
   )+(\ah_{IJ}^{K12}+\ah_{IJ}^{K21}) \sin (\phi ) \cos (\phi )+\ah_{IJ}^{K22} \sin ^2(\phi )\right) \notag \\
   &\qquad +\sin (\theta ) \cos (\theta )
   ((\ah_{IJ}^{K13}+\ah_{IJ}^{K31}) \cos (\phi )+(\ah_{IJ}^{K23}+\ah_{IJ}^{K32}) \sin (\phi ))-\ah_{IJ}^{K30} \cos (\theta
   )+\ah_{IJ}^{K33}\cos ^2(\theta ) .
\end{align}

\subsection{The conformal wave equation\label{conformal:sec}}
Letting
\begin{equation*}
\gt =\psi_* \gb
\end{equation*}
denote the push-forward of the Minkowski metric \eqref{gbdef} from $\Mb$ to $M$ using the map \eqref{psibdef}, we find after a routine calculation that
\begin{equation} \label{Mctrans}
\gt = \Omega^2 g
\end{equation}
where
\begin{equation}\label{MOmegadef}
\Omega = \frac{1}{r(2-t)t}
\end{equation}
and
\begin{equation} \label{Mgdef}
g= -dt\otimes dt + \frac{1-t}{r}(dt\otimes dr + dr\otimes dt) +\frac{(2-t)t}{r^2}dr\otimes dr +\gsl.
\end{equation}
Using the map \eqref{psibdef} to push-forward the wave equations \eqref{Mbwave} yields the system
of wave equations
\begin{equation}  
\gt^{\alpha\beta}\nablat_\alpha \nablat_\beta \ut^K = \at^{K\alpha\beta}_{IJ}\nablab_\alpha \ut^I \nablat_\beta \ut^J \label{Mtwave} 
\end{equation}
where $\nablat_\alpha$ is the Levi-Civita connection of the metric $\gt_{\alpha\beta}$,
\begin{align} \label{utdef}
\ut^K &= \psi_* \ub^K
\intertext{and}
\at^{K\alpha\beta}_{IJ} &= \psi_*(\ab^{K}_{IJ})^{\alpha\beta}. \label{atdef}
\end{align}
Since $M=\psi(\Mb)$, it is clear the original system of semilinear wave equations \eqref{Mbwave} on $\Mb$ are completely equivalent to \eqref{Mtwave} on $M$.

Next, we observe that the Ricci scalar curvature of $\gt_{\alpha\beta}$ vanishes by virtue of $\gt_{\alpha\beta}$ being the push-forward of the Minkowski metric. Furthermore, a straightforward calculation using \eqref{Mgdef} shows that
the Ricci scalar of the metric $g_{\alpha\beta}$ also vanishes. Consequently, it follows from the formulas \eqref{utrans}-\eqref{wavetransC}  and \eqref{ftransB}-\eqref{ftransC}, with $n=4$, from Appendix \ref{ctrans}
that the system of wave equations \eqref{Mtwave} transform under the conformal transformation \eqref{Mctrans} into
\begin{equation} \label{MwaveA}
g^{\alpha\beta}\nabla_\alpha \nabla_\beta u^K = f^K
\end{equation}
where $\nabla$ is the Levi-Civita connection of $g$,
\begin{equation}\label{utK2uK}
\ut^K = rt(2-t)u^K
\end{equation}
and
\begin{align}
f^K = \at_{IJ}^{K\mu\nu}\biggl(\frac{1}{rt(2-t)}\nabla_\mu u^I\nabla_\nu u^J &
+\frac{1}{(rt(2-t))^2}\bigl(\nabla_\mu(rt(2-t)) u^I \nabla_\nu u^J+\nabla_\mu u^I \nabla_\nu(rt(2-t)) u^J \bigr) \notag
\\
&+ \frac{1}{(rt(2-t))^3}\nabla_\mu\bigl(rt(2-t)\bigr) \nabla_\nu(rt(2-t)) u^I u^J\biggr). \label{fKdef}
\end{align}
We will refer to this system as the \textit{conformal wave equations}.

A routine computation involving \eqref{Mgdef} then shows that conformal wave equations \eqref{MwaveA} can be expressed as
\begin{equation} \label{MwaveB}
(-2+t)t\del{t}^2 u^K + r^2 \del{r}^2 u^K + 2r(1-t)\del{r}\del{t}u^K + \gsl^{\Lambda\Sigma}\nablasl{\Lambda}\nablasl{\Sigma} u^K + 2(t-1)\del{t}u^K = f^K
\end{equation}
where $\nablasl{\Lambda}$ is the Levi-Civita connection of the metric \eqref{gsldef} on $\mathbb{S}^2$.

For the remainder of the article, we will focus on solving the conformal wave equations \eqref{MwaveB} on neighborhoods of spatial infinity of the form
\begin{equation} \label{Mr0def}
M_{r_0} = \bigl\{ (t,r)\in (1,0)\times (0,r_0) \, \bigl| t > 2 - r_0/r\bigr\} \times \mathbb{S}^2 \subset M, \qquad r_0>0,
\end{equation} 
where initial data is prescribed on the spacelike hypersurface
\begin{equation} \label{Sigmar0def}
\Sigma_{r_0} = \{1\}\times (0,r_0)\times \mathbb{S}^2
\end{equation}
that forms the ``top'' of the domain $M_{r_0}$. Noting that
\begin{equation*}
\psi(\Mb_{r_0})=M_{r_0} \AND \psi(\Sigmab_{r_0})=\Sigma_{r_0}
\end{equation*}
by \eqref{Mbr0def} and \eqref{Sigmabr0def}, 
we conclude that any solution of the conformal wave equations on $M_{r_0}$ with initial data prescribed on $\Sigma_{r_0}$
corresponds uniquely to a solution of the semilinear wave equations 
\eqref{Mbwave} on $\Mb_{r_0}$ with initial data prescribed on $\Sigmab_{r_0}$.

\subsection{Expansion formulas for the tensor components $\at^{K\alpha\beta}_{IJ}$}
We now turn to deriving expansion formulas for the tensor components $\at^{K\alpha\beta}_{IJ}$,
defined by \eqref{atdef}, 
that will determine their behavior in the limit $t\searrow 0$. These results will be
essential for transforming the conformal wave equations \eqref{MwaveB} into Fuchsian form, which
will be carried out in the following section.

Now, from \eqref{psibdef} and \eqref{atdef}, we find, after a routine calculation, that 
\begin{align}
\at^{K00}_{IJ}&= 4 t^2  r^2\bt^{K}_{IJ}+  t^3r^2\ct^{K00}_{IJ}, \label{atexp00} \\
\at^{K01}_{IJ} &= -4 t r^3 \bt^{K}_{IJ}+   t^2 r^3 \ct^{K01}_{IJ} , \label{atexp01} \\
\at^{K10}_{IJ}&= -4 t r^3 \bt^{K}_{IJ} + t^2 r^3 \ct^{K10}_{IJ} , \label{atexp10} \\
\at^{K11}_{IJ} &= 4 r^4(1-2t)\bt^{K}_{IJ} + t^2 r^4\ct^{K11}_{IJ},  \label{atexp11}\\
\at^{K0\Lambda}_{IJ} &= -2 t r (\ab^{K0\Lambda}_{IJ}-\ab^{K1\Lambda}_{IJ})\circ\psi^{-1}+ t^2 r(\ab^{K0\Lambda}_{IJ}-3 \ab^{K1\Lambda}_{IJ})\circ\psi^{-1}+ t^3 r\ab^{K1\Lambda}_{IJ}\circ\psi^{-1},  \label{atexp0Lambda}\\
\at^{K\Sigma 0}_{IJ} &= -2 t r (\ab^{K\Sigma 0}_{IJ}-\ab^{K\Lambda 1}_{IJ})\circ\psi^{-1}+
t^2 r(\ab^{K \Sigma 0}_{IJ}-3 \ab^{K\Sigma 1}_{IJ})\circ\psi^{-1}+ t^3 r\ab^{K\Sigma 1}_{IJ}\circ\psi^{-1},\label{atexpLambda0} \\
\at^{K1\Lambda}_{IJ} &=
2 r^2 (\ab^{K0\Lambda}_{IJ}-\ab^{K1\Lambda}_{IJ})\circ\psi^{-1}-2 t r^2 (\ab^{K0\Lambda}_{IJ}
-\ab^{K1\Lambda}_{IJ})\circ\psi^{-1}- t^2 r^2\ab^{K1\Lambda}_{IJ}\circ\psi^{-1} , \label{atexp1Lambda}\\
\at^{K\Sigma 1}_{IJ} &=
2 r^2 (\ab^{K\Sigma 0}_{IJ}-\ab^{K\Sigma 1}_{IJ})\circ\psi^{-1}-2 t r^2 (\ab^{K\Sigma 0}_{IJ}
-\ab^{K\Sigma 1}_{IJ})\circ\psi^{-1}-  t^2 r^2\ab^{K\Sigma 1}_{IJ}\circ\psi^{-1}  \label{atexpLambda1}
\intertext{and}
\at^{K \Sigma \Lambda}_{IJ} & = \ab^{K \Sigma \Lambda}_{IJ} \circ \psi^{-1}, \label{atexpSigmaLambda}
\end{align}
where
\begin{align}
\bt^{K}_{IJ} &= (\ab^{K00}_{IJ}-\ab^{K01}_{IJ}-\ab^{K10}_{IJ}+\ab^{K11}_{IJ})\circ\psi^{-1}, 
\label{btdef} \\
\ct^{K00}_{IJ} &=-4 \left((\ab^{K00}_{IJ}-2
   \ab^{K01}_{IJ}-2 \ab^{K10}_{IJ}+3 \ab^{K11}_{IJ})\right)\circ\psi^{-1}+ t (\ab^{K00}_{IJ}-5 \ab^{K01}_{IJ}-5
   \ab^{K10}_{IJ}+13 \ab^{K11}_{IJ})\circ\psi^{-1}\notag \\
   & \hspace{4.0cm}+t^2 (\ab^{K01}_{IJ}+\ab^{K10}_{IJ}-6 \ab^{K11}_{IJ})\circ\psi^{-1}+t^3 \ab^{K11}_{IJ}\circ\psi^{-1}
   \label{ct00def} \\
\ct^{K01}_{IJ} &=   2(3
   \ab^{K00}_{IJ}-3 \ab^{K01}_{IJ}-5 \ab^{K10}_{IJ}+5 \ab^{K11}_{IJ})\circ\psi^{-1}-2 t \left((\ab^{K00}_{IJ}-2
   \ab^{K01}_{IJ}-4 \ab^{K10}_{IJ}+5 \ab^{K11}_{IJ})\right)\circ\psi^{-1} \notag \\
   &\hspace{4.0cm}- t^2 (\ab^{K01}_{IJ}+2 \ab^{K10}_{IJ}-5
   \ab^{K11}_{IJ})\circ\psi^{-1}- t^3 \ab^{K11}_{IJ}\circ\psi^{-1} , \label{ct01def} \\
\ct^{K10}_{IJ} &=2 (3
   \ab^{K00}_{IJ}-5 \ab^{K01}_{IJ}-3 \ab^{K10}_{IJ}+5 \ab^{K11}_{IJ})\circ\psi^{-1}-2 t \left((\ab^{K00}_{IJ}-4
   \ab^{K01}_{IJ}-2 \ab^{K10}_{IJ}+5 \ab^{K11}_{IJ})\right)\circ\psi^{-1} \notag \\
   &\hspace{4.0cm}- t^2 (2 \ab^{K01}_{IJ}+\ab^{K10}_{IJ}-5
   \ab^{K11}_{IJ})\circ\psi^{-1}-  t^3 \ab^{K11}_{IJ}\circ\psi^{-1} \label{ct10def}
\intertext{and}
\ct^{K11}_{IJ}&= 2  (2 \ab^{K00}_{IJ}-3
   \ab^{K01}_{IJ}-3 \ab^{K10}_{IJ}+4 \ab^{K11}_{IJ})\circ\psi^{-1} \notag \\
   &\hspace{4.0cm}+2 t (\ab^{K01}_{IJ}+\ab^{K10}_{IJ}-2
   \ab^{K11}_{IJ})\circ\psi^{-1}+t^2 \ab^{K11}_{IJ}\circ\psi^{-1}.  \label{ct11def} 
\end{align}
We further observe from \eqref{psibdef}, \eqref{tr2tbrb}, \eqref{abexp}-\eqref{bbIJKformula}  and \eqref{btdef} that
\begin{align}
\ab^{K \pc \qc}_{IJ}\circ \psi^{-1} &= \cb_{IJ}^{K\pc \qc}+
t r (2-t)\db_{IJ}^{K\pc \qc}+  t^2 r^2 (2-t)^2 \eb_{IJ}^{K\pc \qc}, \label{ab-exp-pc-qc} \\
\ab^{K \alpha \Lambda}_{IJ}\circ \psi^{-1} &= t r (2-t)\db_{IJ}^{K\alpha \Lambda}+  t^2 r^2 (2-t)^2 \eb_{IJ}^{K\alpha \Lambda} \label{ab-exp-alpha-Lambda}, \\
\ab^{K \Sigma \beta}_{IJ}\circ \psi^{-1} &=t r (2-t)\db_{IJ}^{K\Sigma\beta}+  t^2 r^2 (2-t)^2 \eb_{IJ}^{K\Sigma \beta}
\label{ab-exp-Sigma-beta} \\
\intertext{and}
\bt^{K}_{IJ} &= \bb^{K}_{IJ}. \label{bt=bb}
\end{align}

\section{A Fuchsian formulation}

\subsection{First order variables\label{firstorder:sec}}
We now begin the process of transforming the conformal wave equations \eqref{MwaveB} into a Fuchsian form. The transformation starts by
expressing the wave equation in first order form through the introduction of the variables
\begin{equation}\label{U0def}
U_0^K = t\del{t}u^K,  \quad
U_1^K = t^{\frac{1}{2}}r\del{r}u^K, \quad U_{\Lambda}^K =t^{\frac{1}{2}}\nablasl{\Lambda} u^K 
\AND
U_4^K = t^{\frac{1}{2}}u^K. 
\end{equation}
A short calculation then shows that \eqref{MwaveB}, when expressed in terms of these variables, becomes
\begin{align}
(2-t)\del{t} U^K_0 - \frac{2(1-t)}{t}r\del{r}U^K_0 - \frac{1}{t^{\frac{1}{2}}}r\del{r} U_1^K - 
\frac{1}{t^{\frac{1}{2}}}\gsl^{\Lambda\Sigma}\nablasl{\Lambda}U^K_{\Sigma} &= 
-\frac{1}{t^{\frac{1}{2}}}U^K_1
+U_0^K-f^K,  \label{MwaveC}
\end{align}
while the evolution equations for the variables $U_1^K$, $U^K_\Lambda$ and $U_4^K$ are easily computed to be
\begin{equation} \label{MwaveD.1}
\del{t}U_1^K = \frac{1}{t^{\frac{1}{2}}}r\del{r} U_0^K + \frac{1}{2 t}U^K_1, \quad
\del{t}U_\Lambda^K =  \frac{1}{t^{\frac{1}{2}}}\nablasl{\Lambda}U^K_0 + \frac{1}{2 t}U^K_\Lambda  
\AND
\del{t}U_4^K = \frac{1}{2 t}U^K_4+\frac{1}{t^{\frac{1}{2}}}U_0^K, 
\end{equation}
respectively. It is worthwhile noting that system \eqref{MwaveC}-\eqref{MwaveD.1} is in symmetric hyperbolic form.

To proceed, we use the first order variables \eqref{U0def} to write $\nabla_\mu u^I$ as
\begin{equation*}
	\nabla_\mu u^I = t^{-\frac{1}{2}}\bigl(t^{-\frac{1}{2}}U^I_0\delta_\mu^0+ r^{-1}U^I_1\delta^1_\mu
+U^I_\Lambda\delta^\Lambda_\mu\bigr).
\end{equation*}
Using this, we then observe that the three main groups of terms from \eqref{fKdef} can be expressed in terms of the 
first order variables as
\begin{align*}
- &\frac{1}{rt(2-t)}\at_{IJ}^{K\mu\nu}\nabla_\mu u^I\nabla_\nu u^J 
=  -\frac{1}{2-t}r^{-1}t^{-2}\frac{1}{t}\biggl[
\biggl(\at^{K00}_{(IJ)}U^I_0 U^J_0 +\frac{t^{\frac{1}{2}}}{r}\bigl(\at^{K01}_{IJ}+\at^{K10}_{JI}\bigr)U^I_0 U^J_1\notag \\
&
+\frac{t}{r^2}\at^{K11}_{(IJ)}U^I_1 U^J_1\biggr)+ t^{\frac{1}{2}}\bigl(\at^{K0\Lambda}_{JI} +\at^{K\Lambda 0}_{IJ}\bigr)U^I_\Lambda U^J_0 
+ \frac{t}{r}\bigl(\at^{K1\Lambda}_{JI} + \at^{K\Lambda 1}_{IJ}\bigr)U^I_\Lambda U^J_1
+ t\at^{K\Lambda\Sigma}_{IJ}U^I_\Lambda U^J_{\Sigma}\biggr], 
\end{align*}
\begin{align*}
-&\frac{1}{(rt(2-t))^2}\at^{K\mu\nu}_{IJ}
\bigl(\nabla_\mu(rt(2-t))u^I\nabla_\nu u^J+ \nabla_\mu u^I \nabla_\nu(rt(2-t))u^J\bigr)
\notag \\
&= -\frac{1}{(2-t)^2}r^{-2}t^{-2}\frac{1}{t}
\biggl[\frac{1}{t^{\frac{1}{2}}}\biggl(\bigl(2r(1-t)\at^{K00}_{IJ}+ t(2-t)\at^{K10}_{IJ}\bigr)U^I_4 U^J_0
\notag \\
&\qquad +2t^{\frac{1}{2}}r(1-t)\at^{K0\Sigma}_{IJ}U^I_4U^J_\Sigma
+\biggl(\frac{t^{\frac{3}{2}}(2-t)\at^{K11}_{IJ}}{r}+ 2t^{\frac{1}{2}}(1-t)\at^{K01}_{IJ}\biggr)U^I_4 U^J_1 + t^{\frac{3}{2}}(2-t)\at^{K1\Sigma}_{IJ}U^I_4 U^J_\Sigma\biggr)\notag\\
&\qquad+\frac{1}{t^{\frac{1}{2}}}\biggl( \bigl(2r(1-t)\at^{K00}_{IJ}+ t(2-t)\at^{K01}_{IJ}\bigr)U^I_0 U^J_4 
+2t^{\frac{1}{2}}r(1-t)\at^{K\Lambda 0}_{IJ}U^I_\Lambda U^J_4 \notag \\
&\qquad +\biggl(\frac{t^{\frac{3}{2}}(2-t)\at^{K11}_{IJ}}{r}+ 2t^{\frac{1}{2}}(1-t)\at^{K10}_{IJ}\biggr)U^I_1 U^J_4 + t^{\frac{3}{2}}(2-t)\at^{K\Lambda 1}_{IJ}U^I_\Lambda U^J_4\biggr)
\biggr]
\end{align*}
and
\begin{align*}
-\frac{1}{(rt(2-t))^3}\at^{K\mu\nu}_{IJ}\nabla_\mu(rt(2-t))&\nabla_\nu(rt(2-t))u^I u^J = -\frac{1}{(2-t)^3}r^{-3}t^{-3}\frac{1}{t}
\bigl[4r^2(1-t)^2\at^{K00}_{(IJ)}\notag \\
&\qquad + 2tr(1-t)(2-t)\bigl(\at^{K01}_{(IJ)}+\at^{K10}_{(IJ)}\bigr)+t^2(2-t)^2\at^{K11}_{(IJ)}\bigr]U^I_4 U^J_4. 
\end{align*}
With the help of these results, it is then not difficult to verify, using \eqref{bbIJKformula}, \eqref{atexp00}-\eqref{atexpSigmaLambda}
and \eqref{ab-exp-pc-qc}-\eqref{bt=bb}, that the nonlinear term \eqref{fKdef}becomes
\begin{align}
-f^K &= -\frac{1}{t} 2 r \bb^K_{IJ}V^I_0 V^J_0 + \frac{1}{t}\biggl[ r \fc^{K00}_{IJ} t^{\frac{1}{2}}U^I_0 t^{\frac{1}{2}}U^J_0
+  r \fc^{K01}_{IJ} t^{\frac{1}{2}}U^I_0 U^J_1  \notag \\
&\qquad+ r \fc^{K11}_{IJ}U^I_1 U^J_1
 + \fc^{K0\Lambda}_{IJ}t^{\frac{1}{2}}U^I_0 U^J_\Lambda
+ \fc^{K1\Lambda}_{IJ} U^I_1 U^J_\Lambda + \fc^{K\Sigma\Lambda}_{IJ} U^I_\Sigma U^J_\Lambda\notag \\
&\qquad + r\gc^{K0}_{IJ}t^{\frac{1}{2}}U^I_0 U^J_4 + r\gc^{K1}_{IJ}U^I_1 U^J_4 
 + \gc^{K\Lambda}_{IJ} t^{\frac{1}{2}}U^I_\Lambda U^J_4 + r\hc^{K}_{IJ}U^I_4 U^J_4 \label{fKexp}
\end{align}
when written in terms of the first order variables, 
where $\{\fc^{K\pc\qc}_{IJ}(t,r)$, $\gc^{K\pc}_{IJ}(t,r)$, $\hc^{K}_{IJ}(t,r)\}$, $\{\fc^{K\pc\Lambda}_{IJ}(t,r)$, $\gc^{K\Lambda}_{IJ}(t,r)\}$
and $\{f^{K\Sigma\Lambda}_{IJ}(t,r)\}$ are collections of smooth scalar, vector, and (2,0)-tensor fields, respectively, on $\mathbb{S}^2$ that
depend smoothly on $(t,r)\in \Rbb \times \Rbb$, and 
we have set 
\begin{equation}\label{V0Kdef}
V_0^K = U^K_0-\frac{1}{t^{\frac{1}{2}}}U^K_1.
\end{equation}

The expansion \eqref{fKexp} motivates us to replace the first order variable $U_0^K$ with $V_0^K$. Doing so, we
see via a routine computation involving \eqref{MwaveC} and \eqref{MwaveD.1} that $V_0^K$ evolves according
to 
\begin{equation} \label{MwaveF}
(2-t)\del{t}V_0^K +r\del{r}V_0^K - 
\frac{1}{t^{\frac{1}{2}}}\gsl^{\Lambda\Sigma}\nablasl{\Lambda}U^K_{\Sigma}=V_0^K - f^K.
\end{equation}
One difficulty with this change of variables is  the system of evolution equations \eqref{MwaveD.1} and \eqref{MwaveF} for the first order variables $V^K_0$ , $U^K_1$, $U^K_\Lambda$ and $U^K_4$
is no longer symmetric hyperbolic. To restore the symmetry, we use the identity $\nablasl{\Lambda} U_1=r \del{r}U_\Lambda$
to write \eqref{MwaveD.1} as
\begin{align}
\del{t}U^K_1&= \frac{1}{t}r\del{r}U_1^K + \frac{1}{t^{\frac{1}{2}}}r\del{r}V_0^K+\frac{1}{2 t}U_1^K,
\label{MwaveF.1}\\
\del{t}U^K_\Lambda & = -\frac{\qtt}{t} r\del{r}U^K_\Lambda + \frac{1}{t^{\frac{1}{2}}}\nablasl{\Lambda}U^K_0
+\frac{\qtt+1}{t}\nablasl{\Lambda}U^K_1 + \frac{1}{2 t}U^K_\Lambda, \label{MwaveF.2} \\
\del{t}U^K_4 &= \frac{1}{2t}U_4 + \frac{1}{t}U_1^K
+ \frac{1}{t^{\frac{1}{2}}}V_0^K ,  \label{MwaveF.3}
\end{align}
where
\begin{equation}\label{qttdef} 
\qtt=\frac{-1+2 t^2 - t^3}{1+ 4 t- 4t^2+t^3}.
\end{equation}
We then define new first order variables by setting
\begin{equation} \label{V1def}
V_1^K = 2 U_1^K + (2-t)t^{\frac{1}{2}}V_0^K,  \quad
V^K_\Lambda = \ptt U^K_\Lambda 
\AND
V^K_4  = U^K_4, 
\end{equation}
where
\begin{equation} \label{pdef}
\ptt=\sqrt{\frac{1+4t-4t^2+t^3}{2-t}},
\end{equation}
and observe that the evolution equations \eqref{MwaveF}-\eqref{MwaveF.3} can be expressed
in terms of the variables $V_0^K$, $V^K_1$, $V^K_\Lambda$ and $V^K_4$ in the following symmetric hyperbolic form:
\begin{equation}\label{MwaveG}
B^{0}\del{t}V^K+\frac{1}{t}B^{1} r\del{r}V^K + \frac{1}{t^{\frac{1}{2}}} B^{\Sigma} \nablasl{\Sigma}V^K
= \frac{1}{t}\Bc \Pbb V^K + \Cc V^K+ F^K
\end{equation}
where
\begin{align} 
V^K&= (V^K_\Ic) = \begin{pmatrix}V^K_0 & V^K_1 & V^K_\Lambda & V^K_4\end{pmatrix}^{\tr}, \label{Vdef}\\
B^0&= \begin{pmatrix}2-t & 0 & 0 & 0\\ 0 & 2-t & 0 & 0 \\ 0 & 0 &(2-t)\delta_{\Omega}^{\Lambda} & 0\\
0 & 0 & 0 & 1 \end{pmatrix}, \label{B0def}\\
B^1&= \begin{pmatrix} t & 0 & 0 & 0\\ 0 & -(2-t) & 0 & 0 \\ 0 & 0 & (2-t)\qtt \delta_{\Omega}^{\Lambda} & 0\\
0 & 0 & 0 & 0 \end{pmatrix}, \label{B1def}\\
B^\Sigma&= \begin{pmatrix} 0& 0 & -\frac{1}{\ptt}\gsl^{\Sigma\Lambda}  & 0\\ 0 & 0 &  -\frac{(2-t)t^{\frac{1}{2}}}{\ptt}\gsl^{\Sigma\Lambda} & 0 \\ -\frac{1}{\ptt} \delta_{\Omega}^{\Sigma} &  -\frac{(2-t)t^{\frac{1}{2}}}{\ptt} \delta_{\Omega}^{\Sigma} & 0 & 0\\
0 & 0 & 0 & 0 \end{pmatrix} \label{BSigmadef},\\
\Bc&= \begin{pmatrix} 2 & 0 &0  & 0\\0 & \frac{2-t}{2} & 0 & 0 \\ 0 & 0 & \frac{2-t}{2}\gsl_{\Omega}^{\Sigma} & 0\\
0 & \frac{1}{2} & 0 &\frac{1}{2} \end{pmatrix}, \label{Bcdef}\\
\Cc &= \begin{pmatrix}
 1 & 0 & 0 & 0 \\
 0 & 0  & 0 & 0\\
 0 & 0 & \frac{9-16 t+10 t^2- 2 t^3}{2(1+4 t -4 t^2 + t^3)} \delta_{\Omega}^{\Lambda}  & 0\\
   \frac{1}{2}t^{\frac{1}{2}} & 0 & 0 & 0
   \end{pmatrix}, \label{Ccdef}\\
\Pbb &= \begin{pmatrix}
 0 & 0 & 0 & 0 \\
 0 & 1  & 0 & 0\\
 0 & 0 & \delta_\Sigma^\Lambda & 0\\
0 & 0 & 0 & 1
   \end{pmatrix} \label{Pbbdef}
\intertext{and}
F^K & = \begin{pmatrix}-f^K &
-(2-t)t^{\frac{1}{2}}f^K & 0 & 0 \end{pmatrix}^{\tr}. \label{Fdef}
\end{align}

Now, from the definitions  \eqref{B0def}, \eqref{B1def}, \eqref{Bcdef} and \eqref{Pbbdef}, it is not difficult to verify that $\Pbb$ is a covariantly constant, time-independent, symmetric projection operator that commutes
with $B^0$, $B^1$ and $\Bc$, that is,
\begin{equation} \label{Pbbprops}
\Pbb^2 = \Pbb, \quad \Pbb^{\tr}=\Pbb, \quad \quad \del{t}\Pbb=0, \quad \del{r}\Pbb =0, \AND \nablasl{\Lambda}\Pbb = 0
\end{equation} 
and
\begin{equation} \label{Pbbcom}
[B^0,\Pbb]=[B^1, \Pbb] = [\Bc,\Pbb]=0,
\end{equation}
where the symmetry is with respect to the inner-product
\begin{equation} \label{hdef}
h(Y,Z) = \delta^{\pc\qc}Y_\pc Z_\qc + \gsl^{\Sigma\Lambda}Y_\Lambda Z_\Sigma + Y_4 Z_4.
\end{equation}
Furthermore, it is also not difficult to verify that $B^0$ and $B^1$ and $B^\Sigma \eta_\Sigma$ are symmetric with respect to \eqref{hdef}
and that $B^0$ satisfies
\begin{equation} \label{B0lowbnd}
h(Y,Y)\leq h(Y,B^0Y) 
\end{equation} 
for all $Y=(Y_\Ic)$ and $0<t\leq 1$, which in particular, implies that the system \eqref{MwaveG} is symmetric hyperbolic.

Using \eqref{Bcdef} and \eqref{hdef}, we observe, with the help of Young's inequality (i.e. $|ab|\leq \frac{\ep}{2}a^2+\frac{1}{2\ep}b^2$), that
\begin{align*}
h(Y,\Bc Y) &= 2 Y_0^2+\frac{2-t}{2} Y_1^2 + Y_1 Y_4 +\frac{2-t}{2}\gsl^{\Lambda\Sigma}Y_\Lambda Y_\Sigma +\frac{1}{2} Y_4^2\\
&\geq 2 Y_0^2+\frac{2-t-\ep}{2} Y_1^2 +\frac{2-t}{2}\gsl^{\Lambda\Sigma}Y_\Lambda Y_\Sigma +\frac{1}{2}\biggl(1-\frac{1}{\ep}\biggr) Y_4^2.
\end{align*}
Choosing $\ep = \frac{1}{2}\bigl(1-t-\sqrt{5-2t+t^2}\bigr)$, we then have
\begin{align*}
h(Y,\Bc Y) &\geq 2 Y_0^2+\frac{1}{4}\bigl(3-t+\sqrt{5-2 t+t^2}\bigr) Y_1^2  +\frac{2-t}{2}\gsl^{\Lambda\Sigma}Y_\Lambda Y_\Sigma +
\frac{1}{4}\bigl(3-t+\sqrt{5-2 t+t^2}\bigr) Y_4^2 \\
& \geq 2 Y_0^2 + \frac{2-t}{2}\bigl(Y_1^2 +\gsl^{\Lambda\Sigma}Y_\Lambda Y_\Sigma  + Y^2_4 \bigr) \\
& \geq \frac{1}{2}\Bigl((2-t)Y_0^2 + (2-t)Y_1^2 +(2-t)\gsl^{\Lambda\Sigma}Y_\Lambda Y_\Sigma  + Y^2_4 \Bigr),
\end{align*}
which together with \eqref{B0def} and \eqref{hdef} allows us to conclude that
\begin{equation} \label{B0Bcbnd}
h(Y,B^0 Y) \leq 2 h(Y,\Bc Y)
\end{equation}
for all $Y=(Y_\Ic)$ and $0<t\leq 1$.

Next, from \eqref{V0Kdef} and \eqref{V1def}, we get
\begin{equation}
t^{\frac{1}{2}}U_0^K= \frac{1}{2}\bigl(V_1^K + t V_0^K \bigr), \quad U_1^K=\frac{1}{2}\bigl(V_1^K-(2-t)t^{\frac{1}{2}}V_0^K\bigr), \quad U_\Lambda^K=\frac{1}{\ptt}V^K_\Lambda \AND U_4^K=V_4^K. \label{VtoU}
\end{equation}
Using these along with \eqref{fKexp} and \eqref{Pbbdef} allows us to expand \eqref{Fdef} as
\begin{equation} \label{FKexp}
F^K = -\frac{2}{t}\bb^{K}_{IJ}rV^I_0 V_0^J \ev_0 + G^K
\end{equation}
where
\begin{equation} \label{FcKexp}
G^K =
 G_0^K(t^{\frac{1}{2}},t,r,V,V)+
 \frac{1}{t^{\frac{1}{2}}} G_1^K(t^{\frac{1}{2}},t,r,V,\Pbb V)
+ \frac{1}{t} G_2^K(t^{\frac{1}{2}},t,r,\Pbb V, \Pbb V),
\end{equation}
\begin{equation}\label{ev0def}
\ev_0 = ( \delta_\Ic^0) = \begin{pmatrix} 1 & 0 & 0 & 0 \end{pmatrix}^{\tr}, 
\end{equation}
\begin{equation} \label{Vvecdef}
V= (V^I)=(V^I_\Ic),
\end{equation}
and  the $G^K_a(\tau,t,r,Y,Z)$, 
$a=0,1,2$, are smooth bilinear maps with $G^K_2$ satisfying
\begin{equation}\label{PbbGK=0}
\Pbb G^K_2 = 0.
\end{equation} 

\begin{rem}
Here, we are using the term \textit{smooth bilinear map} to mean a map of the form
\begin{equation*}
H^K(\tau,t,r,Y,Z) = H^{K\pc\qc}_{IJ}(\tau,t,r)Y^I_\pc Z^J_\pc +  H^{K\pc\Lambda}_{IJ}(\tau,t,r)Y^I_\pc Z^J_\Lambda+ 
H^{K\Sigma\Lambda}_{IJ}(\tau,t,r) Y^I_\Sigma Z^J_\Lambda
\end{equation*}
where $H^{K\pc\qc}_{IJ}(\tau,t,r)$, $H^{K\pc\qc}_{IJ}(\tau,t,r)$, and $H^{K\Sigma\Lambda}_{IJ}(\tau,t,r)$ are collections of
smooth scalar, vector, and (2,0)-tensor fields on $\mathbb{S}^2$ that depend smoothly on the parameters $(\tau,t,r)\in \Rbb\times \Rbb \times \Rbb$.
\end{rem}

For the subsequent analysis, it will be advantageous to introduce a change of radial coordinate via
\begin{equation} \label{rhodef}
r= \rho^m, \quad m\in \Zbb_{\geq 1}.
\end{equation}
Using the transformation law $r\del{r} = r \frac{d\rho}{dr}\del{\rho}= \frac{\rho}{m}\del{\rho}$, we can express the system
\eqref{MwaveG} as
\begin{equation}\label{MwaveH}
B^{0}\del{t}V^K+\frac{1}{t} \frac{\rho}{m}B^{1}\del{\rho}V^K + \frac{1}{t^{\frac{1}{2}}} B^\Sigma \nablasl{\Sigma}V^K
= \frac{1}{t}\Bc \Pbb V^K + \Cc V^K+ F^K
\end{equation}
where now
\begin{equation} \label{FKexprho}
F^K = -\frac{2}{t}\bb^{K}_{IJ}\rho^m V^I_0 V_0^J \ev_0 + G^K
\end{equation}
and
\begin{equation} \label{FcKexprho}
G^K =  
 G_0^K(t^{\frac{1}{2}},t,\rho^{m},V,V)+\frac{1}{t^{\frac{1}{2}}} G_1^K(t^{\frac{1}{2}},t,\rho^{m},V,\Pbb V)
+ \frac{1}{t} G_2^K(t^{\frac{1}{2}},t,\rho^{m},\Pbb V, \Pbb V).
\end{equation}
It is also clear that the neighborhood of infinity $M_{r_0}$ and the initial data hypersurface 
$\Sigma_{r_0}$, see \eqref{Mr0def} and \eqref{Sigmar0def},  can be expressed in terms of $\rho$ as 
\begin{equation} \label{Mr02rho}
M_{r_0} = \bigl\{ (t,\rho)\in (1,0)\times (0,\rho_0) \, \bigl| t > 2 - \rho^m_0/\rho^m \bigr\} \times \mathbb{S}^2, \qquad 
\rho_0= (r_0)^{\frac{1}{m}},
\end{equation} 
and
\begin{equation} \label{Sigmar02rho}
\Sigma_{r_0} = \{1\}\times (0,\rho_0)\times \mathbb{S}^2,
\end{equation}
respectively.

\subsection{The extended system\label{extendedsec}}
Rather than solving \eqref{MwaveH} on $M_{r_0}$, we will instead solve an extended version of this system
on the extended spacetime $(0,1)\times \Sc$
where 
\begin{equation*}
\Sc = \Tbb \times \mathbb{S}^2
\end{equation*}
and $\Tbb$ is the 1-dimensional torus obtained from identifying the end points of the interval $[-3\rho_0,3\rho_0]$.
Initial data will be prescribed on the hypersurface $\{1\}\times \Sc$.

To define the extended system, we let $\hat{\chi}(\rho)$ denote a smooth cut-off function satisfying
$\hat{\chi} \geq 0$, $\hat{\chi}|_{[-1,1]} = 1$ and $\supp(\hat{\chi}) \subset (-2,2)$,
and use it to define the smooth cut-off function
\begin{equation*}
\chi(\rho) = \hat{\chi}\bigl(\rho/\rho_0\bigr)
\end{equation*}
on $\Tbb$, which is easily seen to satisfy
$\chi \geq 0$, $\chi|_{[-\rho_0,\rho_0]} = 1$ and $\supp(\chi) \subset (-2\rho_0,2\rho_0)$.
With the help of this cut-off function, we then define the \textit{extended system} by 
\begin{equation}\label{MwaveI}
B^{0}\del{t}V^K+\frac{1}{t} \frac{\chi\rho}{m}B^{1}\del{\rho}V^K + \frac{1}{t^{\frac{1}{2}}} B^\Sigma \nablasl{\Sigma}V^K
= \frac{1}{t}\Bc \Pbb V^K + \Cc V^K+ \Fc^K
\end{equation}
where
\begin{gather} 
\Fc^K = \frac{1}{t}Q^K \ev_0 + \Gc^K, \label{FcKdef} \\
Q^K = -2\bb^{K}_{IJ}\chi(\rho)\rho^m V^I_0 V_0^J, \label{QKdef} \\
\Gc^K = 
\Gc_0+\frac{1}{t^{\frac{1}{2}}}\Gc_1 +
 \frac{1}{t}\Gc_2, \label{Gcdef}\\
\Gc_0^K =   G_0^K(t^{\frac{1}{2}},t,\chi(\rho)\rho^m,V,V), \label{GcK0def} \\
\Gc_1^K =  G_1^K(t^{\frac{1}{2}},t,\chi(\rho)\rho^m,V,\Pbb V), \label{GcK1def}\\
\Gc_2^K =  G_2^K(t^{\frac{1}{2}},t,\chi(\rho)\rho^m,\Pbb V,\Pbb V) \label{GcK2def}
\intertext{and}
\Pbb \Gc_2^K = 0. \label{PbbGc2=0}
\end{gather}

By definition, see \eqref{Vdef}, the fields $V^K$ are time-dependent sections of the vector bundle
\begin{equation*}
\Vbb = \bigcup_{y\in \Sc} \Vbb_{y}
\end{equation*}
over $\Sc$ with fibers $\Vbb_y = \Rbb\times \Rbb \times \text{T}^*_{\text{pr}(y)}\mathbb{S}^2\times \Rbb$
where $\text{pr} :  \Sc = \Tbb \times \mathbb{S}^2 \longrightarrow \mathbb{S}^2$ is the canonical projection.
We further note that \eqref{hdef} defines an inner-product on  $\Vbb$, and 
recall that $B^0$, $B^1$ and $B^\Sigma\xi_\Sigma$
are symmetric with respect to this inner-product. The symmetry of these operators together with the lower
bound  \eqref{B0lowbnd} for $B^0$ imply that the extended system \eqref{MwaveI} is symmetric hyperbolic, a fact that
will be essential to our arguments below.

Noting from the definition \eqref{Mr02rho} that the boundary of the region $M_{r_0}$  can be decomposed as
\begin{equation*}
\del{}M_{r_0} = \Sigma_{r_0}\cup \Sigma^+_{r_0} \cup \Gamma^{-} \cup \Gamma^{+}_{r_0}
\end{equation*}
where
\begin{gather*}
\Gamma^{-} = [0,1] \times \{0\} \times \mathbb{S}^2,\quad
\Gamma^{+}_{r_0} = \biggl\{ \, (t,r) \in [0,1]\times (0,\rho_0] \: \biggl|\: t = 2- \frac{\rho_0^m}{\rho^m} \, \biggr\} \times \mathbb{S}^2
\AND
\Sigma^+_{r_0} = \{0\}\times \Bigl(0,\frac{\rho_0}{2^{\frac{1}{m}}}\Bigr)\times \mathbb{S}^2, 
\end{gather*} 
we find that
$n^{-}= -d\rho$ and $n^{+} = -dt + m\frac{\rho_0^m}{\rho^{m+1}} d\rho$
define outward pointing co-normals to $\Gamma^{-}$ and $\Gamma^{+}_{r_0}$, respectively. Furthermore, we have from \eqref{B0def}-\eqref{BSigmadef} that
\begin{gather}
\Bigl(n^{-}_{0} B^0 +n^{-}_1 \frac{\chi\rho}{m} B^1 + n^{-}_\Sigma B^\Sigma\Bigr) \Bigl|_{\Gamma^{-}} = 0 \label{wspacelike1}
\intertext{and}
 \Bigl(n^{+}_{0} B^0 +n^{+}_1 \frac{\chi\rho}{m} B^1 + n^{+}_\Sigma B^\Sigma\Bigr)\Bigl|_{\Gamma^{+}_{r_0}}  = \begin{pmatrix}
 -(1-t)(2-t)& 0 & 0 & 0\\
0 &-(2-t)(3-t) & 0 & 0 \\
0 & 0 & -(2-t)(1-\qtt(2-t))\delta^\Lambda_\Omega & 0\\
0 & 0 & 0 & 0
\end{pmatrix}, \label{wspacelike2}
\end{gather}
where in deriving this we have used the fact that $2-t=\frac{\rho_0^m}{\rho^m}$ on $\Gamma^+_{r_0}$. By \eqref{qttdef}, we have that $1-\qtt(2-t)$ satisfies
$1<1-\qtt(2-t) < 3$ for $0<t<1$. From this inequality, \eqref{hdef}, \eqref{wspacelike1} and \eqref{wspacelike2},
we deduce that
\begin{equation*}
h\Bigl(Y,\Bigl(n^{-}_{0} B^0 +n^{-}_1 \frac{\chi\rho}{m} B^1 + n^{-}_\Sigma B^\Sigma\Bigr) \Bigl|_{\Gamma^{-}}Y\Bigr) \leq 0 \AND
\quad h\Bigl(Y,\Bigl(n^{+}_{0} B^0 +n^{+}_1 \frac{\chi\rho}{m} B^1 + n^{+}_\Sigma B^\Sigma\Bigr)\Bigl|_{\Gamma^{+}_{r_0}}Y\bigr) \leq 0
\end{equation*}
for all $Y=(Y_\Ic)$. Consequently, by definition, see \cite[\S 4.3]{Lax:2006}, the surfaces $\Gamma^{-}$ and $\Gamma^+_{r_0}$ are weakly spacelike, and it follows that any solution of the extended system \eqref{MwaveI} on the extended spacetime $(0,1)\times\Sc$ will yield
by restriction a solution of the system 
\eqref{MwaveH} on the region \eqref{Mr02rho} that is uniquely determined by the restriction of the initial data to \eqref{Sigmar02rho}. From this property and the above arguments, we conclude that the existence 
of solutions to the conformal wave equations \eqref{MwaveB} on $M_{r_0}$ can
be obtained from solving the initial value problem 
\begin{align} 
B^{0}\del{t}V^K+\frac{1}{t}\frac{\chi\rho}{m}B^{1} \del{\rho}V^K + \frac{1}{t^{\frac{1}{2}}} B^\Sigma \nablasl{\Sigma}V^K
&= \frac{1}{t}\Bc \Pbb V^K + \Cc V^K+ \Fc^K  && \text{in $(0,1) \times \Sc$,}  \label{MwaveJ.1}\\
V^K &= \mathring{V}{}^K && \text{in $\{1\} \times \Sc$,}  \label{MwaveJ.2}
\end{align} 
for initial data $\mathring{V}{}^K=(\mathring{V}_\Ic^K)$ satisfying the constraints 
\begin{equation} \label{idataconstraint}
\nablasl{\Lambda}\mathring{V}{}^K_4 = \frac{1}{\sqrt{2}}\mathring{V}{}^K_\Lambda \AND 
\frac{\rho}{m}\del{\rho}\mathring{V}{}^K_4= \frac{1}{2}\bigl( \mathring{V}{}^K_1-\mathring{V}{}^K_0\bigr)
\quad \text{in $\Sigma_{r_0}$.}
\end{equation}
Moreover, solutions to \eqref{MwaveB} generated this way are independent of the particular form of the initial data 
$\mathring{V}$ on $(\{1\}\times 
\Sc)\setminus \Sigma_{r_0} $ and are determined from solutions of the IVP \eqref{MwaveJ.1}-\eqref{MwaveJ.2} via
\begin{equation} \label{VK2uK}
u^K(t,r,\theta,\phi) = \frac{1}{t^{\frac{1}{2}}}V_4^K(t,r^{\frac{1}{m}},\theta,\phi).
\end{equation} 
Finally, solutions to the semilinear wave equations \eqref{Mbwave} on $\Mb_{r_0}$ can then be obtained from \eqref{VK2uK}
using \eqref{utdef} and \eqref{utK2uK}, which yield the explicit formula 
\begin{equation} \label{VK2ubK}
\ub^K(\tb,\rb,\theta,\phi) = \frac{\rb}{\rb^2-\tb^2}\biggl(1-\frac{\tb}{\rb}\biggr)^{\frac{1}{2}}\biggl(1+\frac{\tb}{\rb}\biggr)
V_4^K\biggl(1-\frac{\tb}{\rb},\biggl(\frac{\rb}{\rb^2-\tb^2}\biggr)^{\frac{1}{m}},\theta,\phi\biggr).
\end{equation}

\subsubsection{Initial data transformations\label{idatatrans}}
The relation between the initial data
\begin{equation} \label{idatatransA}
(\ub^K,\del{\tb}\ub^K)=(\vb^K,\wb^K) \quad \text{in $\Sigmab_{r_0}$}
\end{equation}
for the semilinear wave equations \eqref{Mbwave} and the corresponding initial data 
\begin{equation*} 
(u^K,\del{t} u^K)=(v^K,w^K) \quad \text{in $\Sigma_{r_0}$}
\end{equation*}
for the conformal wave equations \eqref{MwaveA} is given by
\begin{equation*}
v^K(r,\theta,\phi)=\frac{1}{r}\vb^K\biggl(\frac{1}{r},\theta,\phi\biggr) \AND w^K(r,\theta,\phi)=-\frac{1}{r^2}\wb^K\biggl(\frac{1}{r},\theta,\phi\biggr)
\end{equation*}
as can be readily verified with the help of \eqref{psibdef}, \eqref{utdef} and \eqref{utK2uK}. The initial data for the conformal wave equations, in turn, determines
via \eqref{U0def}, \eqref{V0Kdef}, \eqref{V1def} and \eqref{rhodef} the following initial data for the system \eqref{MwaveH}:
\begin{equation}\label{idatatransB}
\tilde{V}(\rho,\theta,\phi)= \begin{pmatrix}
\frac{1}{\rho^{m}}\Bigl[\frac{1}{\rho^{m}}\del{r}\vb^K\Bigl(\frac{1}{\rho^m},\theta,\phi\Big)+\vb^K\Bigl(\frac{1}{\rho^m},\theta,\phi\Big)
-\frac{1}{\rho^{m}}\wb^K\Bigl(\frac{1}{\rho^m},\theta,\phi\Big)\Bigr] \\
-\frac{1}{\rho^{m}}\Bigl[\frac{1}{\rho^{m}}\del{r}\vb^K\Bigl(\frac{1}{\rho^m},\theta,\phi\Big)+\vb^K\Bigl(\frac{1}{\rho^m},\theta,\phi\Big)
+\frac{1}{\rho^{m}}\wb^K\Bigl(\frac{1}{\rho^m},\theta,\phi\Big)\Bigr]\\
\frac{\sqrt{2}}{\rho^m}\del{\theta}\vb^K\Bigl(\frac{1}{\rho^m},\theta,\phi\Big)\\
 \frac{\sqrt{2}}{\rho^m}\del{\phi}\vb^K\Bigl(\frac{1}{\rho^m},\theta,\phi\Big)\\
 \frac{1}{\rho^m}\vb^K\Bigl(\frac{1}{\rho^m},\theta,\phi\Big)
\end{pmatrix},
\end{equation} 
which, of course, satisfies the constraint \eqref{idataconstraint}. By the above discussion, we can extend this data in any matter we like to
$\Sc$ to obtain initial data for the extended system \eqref{MwaveJ.1}, and thus, we can choose any initial $\mathring{V}$
for \eqref{MwaveJ.1}  on $\Sc$ satisfying
\begin{equation} \label{idatatransC}
\mathring{V}|_{\Sigma_{r_0}}=\tilde{V}
\end{equation}
in order to obtain solutions to \eqref{Mbwave} on $\Mb_{r_0}$ that are uniquely determined by the initial data \eqref{idatatransA}.

\subsection{The differentiated system}
While the extended system \eqref{MwaveI} is in Fuchsian form, it is not yet in a form that is required in order to apply the
Fuchsian GIVP existence
theory developed in \cite{BOOS:2020}. To obtain a system that is in the required form,  we need to
modify \eqref{MwaveI} and complement it with a differentiated version. The differentiated version is obtained by applying the 
Levi-Civita connection $\Dc_j$ of the Riemannian metric\footnote{See Appendix \ref{indexing} for our indexing conventions.}
\begin{equation} \label{qdef}
q= q_{ij}dy^i \otimes dy^j:= d\rho\otimes d\rho + \gsl, \quad y=(y^i):=(\rho,\theta,\phi),
\end{equation}
on $\Sc$.
Noting that
\begin{equation} \label{Dcdef}
\Dc_i = \delta_i^1 \del{\rho}+\delta_i^\Lambda \nablasl{\Lambda},
\end{equation}
where we recall that $\nablasl{\Lambda}$ is the Levi-Civita connection of the metric $\gsl_{\Lambda\Sigma}$ on $\mathbb{S}^{2}$,
we see after a short calculation that applying $\Dc_j$ to \eqref{MwaveI} and multiplying
the result by $t^{\kappa}$, where $\kappa \geq 0$ is a constant to be fixed below,
yields  
\begin{align}
B^0\del{t}W^K_j + \frac{1}{t} \frac{\chi \rho}{m}B^1\del{\rho}W^K_j + \frac{1}{t^{\frac{1}{2}}}
B^\Sigma \nablasl{\Sigma}W^K_j
&= \frac{1}{t}\bigl(\Bc\Pbb+\kappa B^0\bigr)W^K_j + \frac{1}{t}\Qc_j^K+
\Hc_j^K\label{MwaveK}
\end{align}
where
\begin{equation} \label{WKdef}
W^K_j =  (W^{K}_{j\Ic}) := \bigl(t^{\kappa} \Dc_j V^K_\Ic\bigr),
\end{equation}
\begin{equation} \label{Qcdef}
 \Qc_j^K = - t^{\kappa}2\chi(\rho)\rho^m\bb^{K}_{IJ} \Dc_j( V^I_{0} V_0^J)\ev_0
\end{equation}
and
\begin{align} 
\Hc^K_j = \Cc W^K_j& + t^{\kappa-\frac{1}{2}}B^\Sigma[\nablasl{\Sigma},\Dc_j]V^K 
- \frac{1}{t}\del{\rho}\biggl(\frac{\chi\rho}{m}B^1\biggr)
\delta_j^1 W^K_1 \notag \\
&+ t^{\kappa}\Dc_j \Gc
-  t^{\kappa-1}2\Dc_j(\bb^{K}_{IJ}\chi\rho^m) V^I_0 V_0^J \ev_0.\label{HcKdef}
\end{align}
It is worthwhile pointing out that the term $[\nablasl{\Sigma},\Dc_j]V^K$ does not involve any differentiation since the commutator
can be expressed completely in terms of the curvature of the metric $\gsl_{\Lambda\Sigma}$.

\subsection{The asymptotic equation}
The next step in the derivation of a suitable Fuchsian equation involves modifying the $V^K_0$ component of the 
extended system \eqref{MwaveI} given by 
\begin{equation} \label{MwaveL}
(2-t)\del{t}V_0^K =-\frac{2}{t}\chi \rho^m \bb^K_{IJ} V_0^I V_0^J + V_0^K- \frac{1}{t^\kappa}\frac{\chi \rho}{m}W^K_{10}
+\frac{1}{t^{\frac{1}{2}+\kappa}\ptt}\gsl^{\Sigma\Lambda}W^K_{\Lambda\Sigma} + \Gc_{0}^K,
\end{equation}
where  $\Gc^K = (\Gc_{\Ic}^K)$,
in order to remove the singular term
$\frac{1}{t}Q^K$. We remove this singular term using the flow\footnote{Note that the flow depends on $y=(y^i)=(\rho,\theta,\phi)\in \Sc$ through
the coefficients $\chi \rho^m \bb^K_{IJ}$, which are smooth functions on $\Sc$.}
$\Fsc(t,t_0,y, \xi)=(\Fsc^K(t,t_0,y,\xi))$ of the asymptotic equation \eqref{asympeqn}, i.e.
\begin{align}
(2-t)\del{t}\Fsc(t,t_0,y,\xi) &=  \frac{1}{t}Q\bigl(\Fsc(t,t_0,y,\xi)\bigr), \label{asympIVP.1} \\
\Fsc(t,t_0,y,\xi)&= \xi. \label{asympIVP.2}
\end{align}
Before proceeding, we note that, for fixed $(t,t_0,y)$, the flow $\Fsc(t,t_0,y, \xi)$ maps $\Rbb^{N}$ to itself, and consequently, the derivative $D_\xi F(t,t_0,y,\xi)$ 
defines a linear map from $\Rbb^{N}$ to itself, or equivalently, a $N\!\times\!N$-matrix.

Using the asymptotic flow, we define a new set of variables $Y(t,y)=(Y^K(t,y))$ via
\begin{equation}\label{Ydef}
V_0(t,y) = \Fsc(t,1,y,Y(t,y))
\end{equation}
where
\begin{equation}\label{V0def}
V_0= (V^K_0).
\end{equation}
A short calculation involving \eqref{MwaveL} and \eqref{asympIVP.1} then shows that $Y$ satisfies
\begin{equation}  \label{MwaveM}
(2-t)\del{t}Y= \Lsc\Gsc
\end{equation}
where
\begin{equation} \label{Lscdef}
 \Lsc =(D_{\xi}\Fsc(t,1,y,Y))^{-1} \AND
\Gsc =  \Biggl(V_0^K- \frac{1}{t^\kappa}\frac{\chi \rho}{m}W^K_{10}
+\frac{1}{t^{\frac{1}{2}+\kappa}\ptt}\gsl^{\Sigma\Lambda}W^K_{\Lambda\Sigma} + \Gc_{0}^K\Biggr).
\end{equation}

\subsubsection{Asymptotic flow assumptions\label{Asymptoticflowassump}}

We now assume that the flow $\Fsc(t,t_0,y,\xi)=(\Fsc^K(t,t_0,y,\xi))$ satisfies the following: for any $\Ntt\in \Zbb_{\geq 0}$, there exist constants $R_0>0$, $\ep \in [0,1/10]$ and $C_{k\ell }>0$, where
$k,\ell\in \Zbb_{\geq 0}$ and $0 \leq k+\ell\leq \Ntt$, and a function $\omega(R)$ satisfying $\lim_{R\searrow 0} \omega(R)=0$
such that
\begin{gather} 
\bigl|\Fsc(t,1,y,\xi)\big|\leq \omega(R)  \label{flowassump.1}
\intertext{and}
\bigl| D_\xi^k \Dc^\ell \Fsc(t,1,y,\xi)\big| 
+\bigl|D_\xi^k \Dc^\ell  \bigl(D_\xi\Fsc(t,1,y,\xi)\bigr)^{-1}\big| 
\leq \frac{1}{t^{\ep}}C_{k\ell}  \label{flowassump.2}
\end{gather}
for all $(t,y,\xi) \in (0,1]\times \Sc \times B_R(\Rbb^{N})$ and $R\in (0,R_0]$. A direct consequence of this assumption is that for any $\sigma>0$ the
maps
$\Ftt$ and $\check{\Ftt}$ 
defined by
\begin{equation} \label{flowassump1}
\Ftt(t,y,\xi)=t^{\ep+\sigma} \Fsc(t,1,y,\xi) \AND  \check{\Ftt}(t,y,\xi)=t^{\ep+\sigma} \bigl(D_\xi\Fsc(t,1,y,\xi)\bigr)^{-1},
\end{equation}
respectively, satisfy
$\Ftt \in C^0\bigl([0,1],C^{\Ntt}(\Sc\times B_R(\Rbb^N),\Rbb)\bigr)$ and 
$\check{\Ftt} \in C^0\bigl([0,1],C^{\Ntt}(\Sc\times B_R(\Rbb^N),\Mbb{N})\bigr)$.
Furthermore, since $\xi=0$ obviously solves the asymptotic equation \eqref{asympeqn},
the flow obviously satisfies $\Fsc(t,t_0,y,0)=0$, which in turn, implies that
\begin{equation} \label{flowassump2}
\Ftt(t,y,0)=0 
\end{equation}
for all $(t,y)\in [0,1]\times \Sc$. We further note if the asymptotic flow assumptions are satisfied for some $\epsilon\in [0,1/10)$, then they will continue to be
satisfied for all $\tilde{\epsilon}\in(\epsilon,1/10]$. Consequently, by increasing $\epsilon$ slightly, we are free to 
replace $\sigma+\epsilon$ in \eqref{flowassump1} by $\epsilon$.

\begin{prop} \label{asympprop}
Suppose the bounded weak null condition holds (see Definition \ref{bwnc}). Then there exists a $R_0\in (0,\Rc_0)$
such that the flow $\Fsc(t,t_0,y,\xi)$ of the asymptotic equation \eqref{asympeqn} satisfies the flow assumptions
\eqref{flowassump.1}-\eqref{flowassump.2} for this choice of $R_0$ and any choice of $\ep \in (0,1/10]$.
\end{prop}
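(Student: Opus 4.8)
The plan is to reduce the asymptotic initial value problem \eqref{asympIVP.1}--\eqref{asympIVP.2} with $t_0=1$ to an autonomous ODE and then run Grönwall estimates on its variational equations. First I would change the time variable to $s=\tfrac12\log\frac{2-t}{t}\in[0,\infty)$ (so $s=0\leftrightarrow t=1$ and $s\to\infty\leftrightarrow t\searrow0$), under which \eqref{asympIVP.1}--\eqref{asympIVP.2} become $\partial_s\Fsc=-Q(\Fsc)$, $\Fsc|_{s=0}=\xi$; here $Q(\cdot,y)$ is, for each fixed $y\in\Sc$, a homogeneous quadratic polynomial in $\xi$ whose coefficients $\chi(\rho)\rho^m$ and $\bb^K_{IJ}$ are smooth, hence bounded together with all their $\Dc$-derivatives on the compact manifold $\Sc$, and one has the elementary conversion $e^{\alpha s}=\bigl(\tfrac{2-t}{t}\bigr)^{\alpha/2}\le 2^{\alpha/2}t^{-\alpha/2}$ for $t\in(0,1]$, $\alpha\ge0$, which is the mechanism turning exponential-in-$s$ growth into a power of $t^{-1}$. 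The bounded weak null condition, which I read as holding uniformly in $y\in\Sc$, then guarantees that for $|\xi|<\Rc_0$ the solution exists for all $s\ge0$ with $\sup_{s\ge0}|\Fsc(s,\xi,y)|\le C$.

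The key step is the bound \eqref{flowassump.1}, for which the a priori bound $C$ coming straight from Definition \ref{bwnc} is not enough: one must exploit the quadratic homogeneity of $Q$. Fix $\Rc_1\in(0,\Rc_0)$, and for $0<|\xi|\le R_0$ set $\lambda=|\xi|/\Rc_1$. Since $Q(\lambda v)=\lambda^2Q(v)$, the map $s\mapsto\lambda\,\Fsc(\lambda s,\lambda^{-1}\xi,y)$ solves the same IVP as $\Fsc(\cdot,\xi,y)$, so by uniqueness $\Fsc(s,\xi,y)=\lambda\,\Fsc(\lambda s,\lambda^{-1}\xi,y)$; because $|\lambda^{-1}\xi|=\Rc_1<\Rc_0$ and $\lambda s$ sweeps out all of $[0,\infty)$, the bounded weak null condition applied to the reference orbit gives $|\Fsc(s,\xi,y)|\le\lambda C=\tfrac{C}{\Rc_1}|\xi|$ for all $s\ge0$. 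Hence \eqref{flowassump.1} holds with $\omega(R)=\tfrac{C}{\Rc_1}R\to0$, and, crucially for what follows, one obtains the uniform orbit bound $|\Fsc(s,\xi,y)|\le\omega(R_0)$ on $(0,1]\times\Sc\times B_{R_0}(\Rbb^N)$.

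For \eqref{flowassump.2} I would induct on the total order $k+\ell\le\Ntt$. Writing $\Phi=D_\xi\Fsc$ and $\Psi=\Phi^{-1}=(D_\xi\Fsc)^{-1}$, the variational equations $\partial_s\Phi=-Q'(\Fsc)\Phi$ and $\partial_s\Psi=\Psi Q'(\Fsc)$, both with identity initial data, together with $|Q'(\Fsc)|\lesssim|\Fsc|\le\omega(R_0)$, give by Grönwall $|\Phi(s)|+|\Psi(s)|\le e^{c\,\omega(R_0)s}$. Differentiating the flow equation and these two equations further in $\xi$ and covariantly in $y$ on $(\Sc,q)$ produces, using $D^{\ge3}_\xi Q=0$ and (for the terms arising from commuting covariant derivatives) the boundedness of the curvature of $q$ as in the remark after \eqref{HcKdef}, linear ODEs of the schematic form $\partial_s\bigl(D_\xi^k\Dc^\ell\Fsc\bigr)=-Q'(\Fsc)\,\bigl(D_\xi^k\Dc^\ell\Fsc\bigr)+\Rc_{k\ell}$ (and similarly for $\Psi$), in which the forcing $\Rc_{k\ell}$ is a finite sum of products of strictly lower‑order derivatives of $\Fsc$ and $\Psi$ with coefficients that are $\Dc$-derivatives of the coefficients of $Q$ (all bounded on $\Sc$). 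The induction hypothesis and Grönwall then yield $|D_\xi^k\Dc^\ell\Fsc(s)|+|D_\xi^k\Dc^\ell\Psi(s)|\le C_{k\ell}\,e^{c_{k\ell}\omega(R_0)s}$; converting back and choosing $R_0$ small enough (depending on the prescribed $\ep$ and on $\Ntt$) that $\bigl(\max_{k+\ell\le\Ntt}c_{k\ell}\bigr)\,\omega(R_0)\le2\ep$ gives \eqref{flowassump.2} with constants $C_{k\ell}$ depending on $\ep,\Ntt,R_0$. Since $\omega(R_0)=\tfrac{C}{\Rc_1}R_0\to0$ as $R_0\searrow0$ this can be arranged for any $\ep\in(0,1/10]$, and enlarging $\ep$ within that range only weakens the bounds.

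The main obstacle, in my view, is exactly the passage from the bounded weak null condition to the decaying bound \eqref{flowassump.1}: the hypothesis is a statement about individual orbits each staying below a fixed constant $C$, and the decay $\omega(R)\to0$ cannot be extracted from it by any direct differential inequality (such estimates only control $|\Fsc|$ on a time window $\sim1/|\xi|$); one genuinely needs the scaling symmetry of the homogeneous quadratic $Q$ to transfer the $\Rc_0$-uniform bound from a fixed reference size $\Rc_1$. Once that is in hand the derivative estimates are routine Grönwall, the only bookkeeping points being that every forcing term in the iterated variational equations is of strictly lower order (so there is no loss of derivatives), and that $R_0$ must be shrunk so the Grönwall rate $\sim\omega(R_0)$ sits below $\ep$; a secondary technical point is to justify (or read into Definition \ref{bwnc}) the uniformity in $y\in\Sc$ of the constants $C,\Rc_0$, which one may reduce, via the rescaling $\xi\mapsto\chi(\rho)\rho^m\xi$, to uniformity over the compact factor $\mathbb{S}^2$.
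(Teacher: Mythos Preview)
Your proposal is correct and follows essentially the same route as the paper: the time change to an autonomous system, the scaling argument exploiting the quadratic homogeneity of $Q$ to upgrade the raw bounded weak null bound $C$ to the linear-in-$R$ bound $\omega(R)=\tfrac{C}{\Rc_1}R$ (the paper isolates this as a separate lemma), and then Gr\"onwall on the variational equations with coefficient of size $\omega(R_0)$, shrinking $R_0$ so the growth rate sits below the prescribed $\ep$. The only cosmetic difference is that you run Gr\"onwall in the autonomous variable $s$ and convert the exponential bound $e^{c\omega(R_0)s}\lesssim t^{-c\omega(R_0)/2}$ at the end, whereas the paper pre-multiplies by $t^{\ep}$ and runs the differential inequality in $t$; your identification of the scaling step as the genuine obstacle (and that it cannot be replaced by a differential inequality) is exactly right.
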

\begin{proof}
We begin the proof by first establishing the following lemma that gives an effective bound on solutions of the asymptotic equation.
\begin{lem} \label{asymplem}
For any $R\in (0,\Rc_0]$, the solutions $\xi$ of the asymptotic IVP \eqref{asympprop1.1}-\eqref{asympprop1.2} exist
for $t\in (0,1]$ and satisfies
\begin{equation}
\sup_{0<t\leq 1}|\xi(t)| \leq \frac{C}{\Rc_0} R \label{asymplem0}
\end{equation}
for any choice of initial data that is bounded by $|\mathring{\xi}| \leq R$.
\end{lem}
\begin{proof}
Since  $Q(\xi)$, see \eqref{Qmap}, is independent of $t$, we can make the asymptotic equation
autonomous through the introduction of the new time variable $\tau = -\frac{1}{2}\ln(2-t) + \frac{1}{2}\ln(t)$,
which maps the time interval $0<t\leq 1$ to $-\infty <\tau \leq 0$.
In terms this new time variable $\tau$, the asymptotic IVP \eqref{asympprop1.1}-\eqref{asympprop1.2} becomes
\begin{align}
\del{\tau}\xi &= Q(\xi), \label{asymplem1.1} \\
\xi|_{\tau=0} &= \mathring{\xi}. \label{asymplem1.2}
\end{align}
By the bounded weak null condition, this IVP admits solutions that are defined for $\tau \in (-\infty,0]$ and satisfy
\begin{equation}\label{asymplem2}
\sup_{-\infty<\tau\leq 0}|\xi(\tau)| \leq C
\end{equation}
provided that  $|\mathring{\xi}| < \Rc_0$. Next, we assume that the initial value $\mathring{\xi}$ satisfies 
$|\mathring{\xi}| < R$ 
for some $R\in (0,\Rc_0]$, and we set
$\tilde{\xi}(\tau) = \frac{1}{\rf}\xi\bigl(\frac{\tau}{\rf}\bigr)$ where
$\rf=\frac{R}{\Rc_0}\in (0,1]$.
Then a  quick calculation shows that $\xit$ satisfies asymptotic equation \eqref{asymplem1.1} where that
initial value is bounded by 
$|\tilde{\xi}|_{\tau=0}| = \bigl|\frac{1}{\rf}\mathring{\xi}\bigr| < \frac{\Rc_0}{R} R = \Rc_0$, 
and consequently, we deduce from \eqref{asymplem2} that $\sup_{-\infty<\tau\leq 0}|\tilde{\xi}(\tau)| \leq C$.
But this implies that $\sup_{-\infty<\tau\leq 0}|\xi(\tau)| \leq \frac{C}{\Rc_0}R$, and the proof of the lemma is complete.
\end{proof}
Implicitly, the solution $\xi=(\xi^K)$ depends on $y\in \Sc$ and the initial data $\mathring{\xi}$. Fixing $\ep >0$ and differentiating the asymptotic
equation \eqref{asympeqn} with respect to $y=(y^i)$ shows that 
\begin{equation}\label{asympprop3}
\eta^K_i= t^{\ep}\Dc_i\xi^K
\end{equation}
satisfies the differential equation
\begin{equation} \label{asympprop3a}
(2-t)\del{t} \eta^K_i =  \frac{1}{t}\bigl( (2-t)\ep \delta^K_J  - 2\chi \rho^m\bigl(\bb^K_{JI}
+ \bb^K_{IJ}\bigr)\xi^I \bigr) \eta^J_i - \frac{1}{t^{1-\ep}}\Dc_i(2\chi \rho^m \bb^K_{IJ}) \xi^I \xi^J.   
\end{equation}
Contracting this equation with $\delta_{LK}\delta^{ki}\eta_k^L$ gives
\begin{equation*}
(2-t)\delta_{LK}\delta^{ki}\eta_k^L \del{t} \eta^K_i = \frac{1}{t}\delta_{LK}\delta^{ki}\eta_k^L
\bigl( (2-t)\ep \delta^K_J  - 2\chi \rho^m\bigl(\bb^K_{JI}
+ \bb^K_{IJ}\bigr)\xi^I \bigr) \eta^J_i -\frac{1}{t^{1-\ep}}\delta_{LK}\delta^{ki}\eta_k^L\Dc_i(2\chi \rho^m \bb^K_{IJ}) \xi^I \xi^J.
\end{equation*}
Letting $|\eta| = \sqrt{\delta_{KL}\delta^{ij}\eta^K_i \eta^L_j}$,
denote the Euclidean norm of $\eta=(\eta_i^K)$, we can write the above equation as
\begin{equation} \label{asympprop4}
\frac{(2-t)}{2}\del{t}|\eta|^2 = \frac{1}{t}\bigl((2-t)\ep |\eta|^2 -2\chi \rho^m\bigl(\bb^K_{JI}
+ \bb^K_{IJ}\bigr) \delta_{LK}\xi^I  \delta^{ki}\eta_k^L\eta^J_i \bigr) -\frac{1}{t^{1-\ep}}\delta_{LK}\delta^{ki}\eta_k^L\Dc_i(2\chi \rho^m \bb^K_{IJ}) \xi^I \xi^J.
\end{equation}
But $\chi \rho^m$ and $\bb^K_{IJ}$ are smooth on $\Sc$, and consequently, these functions and their derivatives are bounded on
$\Sc$. From this fact and the bound on $\xi$ from Lemma \ref{asymplem}, we deduce from  \eqref{asympprop4}
and the Cauchy Schwartz inequality that for any $\sigma \in (0,\ep)$ there exists constants $R_0\in (0,\Rc_0]$ and $C>0$ such that the energy inequality
\begin{equation*}
\frac{(2-t)}{2}\del{t}|\eta|^2 \geq \frac{(2-t)}{t}(\ep-\sigma) |\eta|^2 -\frac{C}{t^{1-\ep}}|\eta|
\end{equation*}
holds for any given $R\in (0,R_0]$ and for all $t\in (0,1]$. But from this inequality, we see that 
\begin{equation*}
\del{t}|\eta| \geq \frac{1}{t}(\ep-\sigma) |\eta| -\frac{C}{t^{1-\ep}}.
\end{equation*}
An application of Gr\"{o}nwall's inequality\footnote{Here, we are using the following form of Gr\" {o}nwall's inequality: if $x(t)$ satisfies
$x'(t) \geq a(t)x(t) -h(t)$, $0<t\leq T_0$, then $x(t)\leq x(T_0)e^{-A(t)}+ \int^{T_0}_t e^{-A(t)+A(\tau)}h(\tau) \, d\tau$ where
$A(t)= \int^{T_0}_t a(\tau)\, d\tau$. In particular, we observe from this that if, $x(T_0)\geq 0$ and  $a(t)=\frac{\lambda}{t}-b(t)$, where  $\lambda \in \Rbb$ and $\bigl|\int^{T_0}_t b(\tau) \, d\tau \bigr|\leq r$, then
\begin{equation*}
x(t) \leq e^{r}x(T_0)\left(\frac{t}{T_0}\right)^\lambda +   e^{2r}t^\lambda \int^{T_0}_t \frac{|h(\tau)|}{\tau^\lambda}\, d\tau
\end{equation*}
for $0\leq t < T_0$.
} then yields
\begin{equation} \label{asympprop4a}
|\eta(t)| \leq |\eta(1)| t^{\ep-\sigma}+   t^{\ep-\sigma}\int^1_t \frac{C}{t^{1-\sigma}}\, d\tau = 
t^{\ep-\sigma}|\eta(1)| +\frac{1}{\sigma}t^{\ep-\sigma}\bigl(1-t^\sigma\bigr).
\end{equation}
From the definition \eqref{asympprop3} and the fact that $\xi(t)=\Fsc(t,1,y,\mathring{\xi})$, we conclude from the above inequality and \eqref{asymplem0} that there
exist constants $C_0, C_{01}>0$ such that the flow $\Fsc$ satisfies the bounds
\begin{equation*}  
|\Fsc(t,1,y,\mathring{\xi})| \leq C_0 R 
\AND
|\Dc\Fsc(t,1,y,\mathring{\xi})| \leq \frac{1}{t^{\sigma}}C_{01} 
\end{equation*}
for all $(t,y,\mathring{\xi})\in (0,1]\times \Sc \times B_{R}(\Rbb^N)$, $R\in (0,R_0]$.

Next, differentiating the asymptotic
equation \eqref{asympeqn} with respect to the initial data $\mathring{\xi}$ shows
that the derivative 
\begin{equation*}
 D_{\mathring{\xi}}\xi= \biggl( \frac{\del{} \xi^K}{\del{} \mathring{\xi}^L}\biggr)
\end{equation*}
satisfies the equation 
\begin{equation} \label{asympprop5}
(2-t)\del{t} D_{\mathring{\xi}}\xi =  \frac{1}{t}L D_{\mathring{\xi}}\xi
\end{equation}
where 
\begin{equation*}
L=(L^K_J):=-2\chi \rho^m\bigl(\bb^K_{JI}
+ \bb^K_{IJ}\bigr)\xi^I \bigr).
\end{equation*}
Furthermore, multiplying \eqref{asympprop5} on the right by  $(D_{\mathring{\xi}}\xi)^{-1}$ yields the equation
\begin{equation} \label{asympprop6}
(2-t)\del{t} ( (D_{\mathring{\xi}}\xi)^{-1})^{\tr} =  -\frac{1}{t} 
L^{\tr} ((D_{\mathring{\xi}}\xi)^{-1})^{\tr}
\end{equation}
for the transpose of $(D_{\mathring{\xi}}\xi)^{-1}$.
Multiplying \eqref{asympprop5} and \eqref{asympprop6} by
$t^\ep$, we find that
\begin{align*}
(2-t)\del{t}( t^\ep D_{\mathring{\xi}}\xi) &=  \frac{1}{t}
\bigl((2-t)\ep + L\bigr) t^\ep D_{\mathring{\xi}}\xi  
\intertext{and}
(2-t)\del{t} (t^\ep (D_{\mathring{\xi}}\xi)^{-1})^{\tr} &=  \frac{1}{t}
\bigl((2-t)\ep-L^{\tr}\bigr)(t^\ep (D_{\mathring{\xi}}\xi)^{-1})^{\tr}.
\end{align*}
Both of the these equations
are of the same general form as \eqref{asympprop3a}, and
the same arguments used to derive from \eqref{asympprop3a} the
bounds \eqref{asympprop4a} for $\eta = t^\ep \Dc \xi$
can be used to obtain similar estimates for
$t^\ep D_{\mathring{\xi}}\xi$ and $(t^\ep(D_{\mathring{\xi}}\xi)^{-1})^{\tr}$. Consequently, shrinking $R_0$ if necessary and arguing as above, we deduce the existence of a constant $C_{10}>0$
such that the estimate 
\begin{equation*}
\bigl|D_{\mathring{\xi}}\xi\bigr|+ \bigl|(D_{\mathring{\xi}}\xi)^{-1}\bigr|\leq \frac{1}{t^{\sigma}}C_{10}
\end{equation*}
holds for $0<t\leq 1$. From this estimate, we see
immediately that
\begin{equation*} 
\bigl|D_{\mathring{\xi}}\Fsc(t,1,y,\mathring{\xi})\bigr|
+\bigl|\bigl(D_{\mathring{\xi}}\Fsc(t,1,y,\mathring{\xi})\bigr)^{-1}\bigr|\leq \frac{1}{t^{\sigma}}C_{10},
\end{equation*}
for all $(t,y,\mathring{\xi})\in (0,1]\times \Sc \times B_{R}(\Rbb^N)$ and $R\in (0,R_0]$. 

Finally, by shrinking $R_0$ again if necessary, similar arguments as above can be used to derive, for any fixed $\Ntt \in \Zbb_{\geq 1}$, the bounds
\begin{equation*}
\bigl|D^k_{\mathring{\xi}}\Dc^\ell\xi\bigr|
+\bigl|D^k_{\mathring{\xi}}\Dc^\ell
\bigl(D_{\mathring{\xi}}\xi)^{-1}\bigr|\leq \frac{1}{t^\sigma}C_{kl} 
\end{equation*}
on the higher derivatives for $1\leq k + \ell \leq \Ntt$. It
is then clear from this inequality that the flow bounds
\begin{equation*}
|D_{\mathring{\xi}}^k \Dc^\ell \Fsc(t,1,y,\mathring{\xi})| \leq \frac{1}{t^{\sigma}}C_{\ell k},
\end{equation*}
hold for all $(t,y,\mathring{\xi})\in (0,1]\times \Sc \times B_{R}(\Rbb^N)$, $2\leq k + \ell \leq \Ntt$, and $R\in (0,R_0]$. This completes the proof of the proposition.
\end{proof}

\subsection{The complete Fuchsian system\label{complete:sec}}
We complete the derivation of the Fuchsian equation by 
complimenting \eqref{MwaveK}
and \eqref{MwaveM} with a third system obtained from applying the projection operator $\Pbb$ to \eqref{MwaveI}, which
leads to an equation for the variables
\begin{equation}\label{XKdef}
X^K = \frac{1}{t^\nu}\Pbb V^K,
\end{equation}
where $\nu\geq 0$ is a constant to be fixed below. Now, a straightforward calculation using \eqref{Pbbdef}, \eqref{Pbbprops}-\eqref{Pbbcom}, 
\eqref{FcKdef},  \eqref{Gcdef} and
\eqref{WKdef} shows that after multiplying \eqref{MwaveI} by $t^{-\nu}\Pbb$ that  $X^K$ satisfies
\begin{align}
B^0\del{t}X^K + \frac{1}{t} \frac{\chi \rho}{m}B^1\del{\rho}X^K 
&= \frac{1}{t}(\Bc - \nu B^0) X^K +\Kc^K
\label{MwaveN}
\end{align}
where
\begin{equation}\label{KcKdef}
\Kc^K=- \frac{1}{t^{\frac{1}{2}+\kappa+\nu}}\Pbb B^\Sigma W_\Sigma^K +\Pbb\Cc \biggl(\frac{1}{t^{\nu}}\Pbb^\perp V^K + X^K\biggr) + \frac{1}{t^\nu}\Pbb \Gc_0^K + \frac{1}{t^{\frac{1}{2}+\nu}}\Pbb \Gc^K_1 
\end{equation}
and
\begin{equation} \label{Pbbperpdef}
\Pbb^\perp = \id - \Pbb
\end{equation}
is the complementary projection oprator. We now complete our derivation of the Fuchsian equation, which will be crucial for our existence proof, by collecting  \eqref{MwaveK}, \eqref{MwaveM} and \eqref{MwaveN} into the following single system:
\begin{align}
A^0\del{t}Z + \frac{1}{t}\frac{\chi \rho}{m}A^1 \del{\rho}Z + \frac{1}{t^{\frac{1}{2}}}
A^\Sigma \nablasl{\Sigma}Z
&= \frac{1}{t}\Ac \Pi Z +  \frac{1}{t}\Qc + \Jc \label{MwaveO}
\end{align}
where
\begin{align}
Z &=  \begin{pmatrix} W^K_j & X^K & Y \end{pmatrix}^{\tr} , \label{Zdef}\\
A^0 &= \begin{pmatrix} B^0 & 0 & 0 \\ 0 & B^0 & 0 \\ 0 & 0 & (2-t)\id \end{pmatrix}, \label{A0def} \\
A^1 & = \begin{pmatrix}  B^1\delta^j_k \delta_K^L & 0 & 0 \\ 0 & B^1\delta_K^L & 0 \\ 0 & 0 &0 \end{pmatrix}, \label{A1def} \\
A^\Sigma & = \begin{pmatrix}  B^\Sigma  & 0 & 0 \\ 0 & B^\Sigma  & 0 \\ 0 & 0 & 0  \end{pmatrix}, \label{ASigmadef} \\
\Ac & = \begin{pmatrix} \Bc\Pbb + \kappa B^0 & 0 & 0 \\ 0 & \Bc-\nu B^0 & 0 \\ 0 & 0 & 2\id  \end{pmatrix}, \label{Acdef} \\
\Pi & = \begin{pmatrix}  \id & 0 & 0 \\ 0 & \id & 0 \\ 0 & 0 & 0  \end{pmatrix}, \label{Pidef}\\
\Qc &= \begin{pmatrix} \Qc^K_j & 0 & 0 \end{pmatrix}^{\tr} \label{Qcvecdef}
\intertext{and}
\Jc &= \begin{pmatrix}\Hc^K_j &  \Kc^K & \Lsc\Gsc \end{pmatrix}^{\tr}.\label{Jcdef}
\end{align}

\subsection{Coefficient properties\label{coefprops}}
We now turn to verifying that the system \eqref{MwaveO} satisfies all the assumptions needed to apply the Fuchsian GIVP existence
theory from \cite{BOOS:2020}.

\subsubsection{The projection operator $\Pi$ and its commutation properties:}
By construction, the field $Z$, defined by \eqref{Zdef}, is a time-dependent section of the vector bundle
\begin{equation*}
\Wbb = \bigcup_{y\in \Sc} \Wbb_{y}
\end{equation*}
over $\Sc$ with fibers
$\Wbb_y =\Bigl(T^*_{y}\Sc \times T^*_{y}\Sc \times \bigl(\text{T}^*_{y}\Sc \otimes T^*_{\text{pr}(y)}\mathbb{S}^2\bigr)\times T^*_{y}\Sc\Bigr)^N\times \Vbb_y^N\times \Rbb^N$ 
where, as above, $\text{pr} :  \Sc  \longrightarrow \mathbb{S}^2$ is the canonical projection and $\Vbb_y=\Rbb\times \Rbb\times \text{T}^*_{\text{pr}(y)}\mathbb{S}^2\times\Rbb$.
Letting $\grave{Z} = ( \grave{W}{}^K_j, \grave{X}{}^K , \grave{Y})$ and $Z$ be as defined above by \eqref{Zdef}, we introduce an inner-product on $\Wbb$ via
\begin{equation} \label{hcdef}
\hc(Z,\grave{Z}) = \delta_{KL}q^{ij}h(W^K_i, \grave{W}{}^L_j)
+ \delta_{KL}h(X^K,\grave{X}{}^L)+\delta_{KL}Y^K \grave{Y}^L,
\end{equation} 
where $h(\cdot,\cdot)$ is the inner-product defined previously by \eqref{hdef}. It is then not difficult to verify that this inner-product is compatible, i.e. $\Dc_j\bigl(\hc(Z,\grave{Z})\bigr)= \hc(\Dc_j Z,\grave{Z})+\hc(Z, \Dc_j \grave{Z})$, with the connection $\Dc_j$ defined above by \eqref{Dcdef}. We further observe from \eqref{Pidef} that
$\Pi$ defines a projection operator, i.e.
\begin{equation} \label{Piproj}
\Pi^2 = \Pi ,
\end{equation}
that is symmetric with respect to the inner-product $\eqref{hcdef}$. It also follows directly from the definitions \eqref{A0def}, \eqref{A1def} and \eqref{Acdef} that
\begin{equation} \label{Picom}
[A^0,\Pi]=[\Ac,\Pi]=0,
\end{equation}
\begin{equation} \label{PipAproj}
\Pi A^1 =  A^1 \Pi =  A^1, \quad \Pi A^\Sigma \eta_\Sigma =A^\Sigma \eta_\Sigma\Pi  = A^\Sigma\eta_\Sigma ,
\end{equation}
and
\begin{equation} \label{PiperpAproj}
\Pi^\perp A^1 =  A^1 \Pi^\perp = \Pi^\perp A^\Sigma \eta_\Sigma =A^\Sigma \eta_\Sigma\Pi^\perp  =0,
\end{equation}
where
\begin{equation*}
\Pi^\perp = \id -\Pi
\end{equation*}
is the complementary projection operator.

\subsubsection{The operators $A^0$, $A^1$, $A^\Sigma n_\Sigma$ and $\Ac$:}
Next, we see from \eqref{B0lowbnd}, \eqref{A0def} and \eqref{hcdef} that $A^0$ satisfies
\begin{align*}
\hc(Z,A^0 Z) &= \delta_{KL}q^{ij}h(W^K_i,B^0 W^L_{j})
+ \delta_{KL}h(X^K,B^0 X^L)+(2-t)\delta_{KL}Y^K Y^L \\
 &\geq \delta_{KL}q^{ij}h(W^K_i,W^L_j)
+ \delta_{KL}h(X^K,X^L)+(2-t)\delta_{KL}Y^K Y^L,
\end{align*}
and hence, that
\begin{equation} \label{gammabnd}
\hc(Z,Z)\leq \hc(Z,A^0 Z).
\end{equation}
Similar calculations using \eqref{Pbbprops}-\eqref{Pbbcom}, \eqref{B0Bcbnd}, \eqref{A0def}, \eqref{Acdef} and \eqref{hcdef}
show that 
\begin{equation} \label{kappabnd}
\kappa \hc(Z,A^0 Z) \leq \hc(Z,\Ac Z)
\end{equation} 
provided that  $\nu,\kappa\geq 0$ and $\kappa+\nu\leq 1/2$.
It is also clear from \eqref{A0def}-\eqref{ASigmadef} that $A^0$, $A^1$ and $A^\Sigma \eta_\Sigma$ are symmetric with respect to the inner-product \eqref{hcdef}.
Finally,  we observe that the inequality 
\begin{equation*}
\biggl|\del{\rho}\biggl(\frac{\chi\rho}{m}B^1\biggr)\biggr| \leq \max_{0\leq t \leq 1}|B^1(t)|
\norm{\del{\rho}(\chi\rho)}_{L^\infty(\Tbb)}\frac{1}{m}
\end{equation*}
follows easily from \eqref{B1def} and \eqref{qdef}.
With the help of this inequality, we deduce from \eqref{A1def} that, for any given $\sigma >0$, there exists an integer $m=m(\sigma)\geq 1$ such
that 
\begin{equation} \label{A1bnd}
\biggl|\del{\rho}\biggl(\frac{\chi\rho}{m}B^1\biggr)\biggr| + \biggl|\del{\rho}\biggl(\frac{\chi\rho}{m} A^1\biggr)\biggr| < \sigma \quad \text{in $(0,1)\times \Sc$}.
\end{equation}

\subsubsection{The source term $\Jc$:}
Using \eqref{Vdef},  \eqref{Pbbdef}, \eqref{Ydef}-\eqref{V0def}, \eqref{flowassump1} and \eqref{XKdef}, we can decompose  
$V^K$ as
\begin{equation} \label{VKdecomp1}
V^K(t,y) = \Pbb V^K(t,y) + \Pbb^\perp V^K(t,y),
\end{equation}
where
\begin{gather}
\Pbb V^K(t,y) = t^\nu X^K(t,y) \label{VKdecomp2}
\intertext{and}
 \Pbb^\perp V^K(t,y) =  \frac{1}{t^{\ep}}( t^\ep V_0^K(t,y))\ev_0 =  
  \frac{1}{t^{\ep}}\Ftt^K\bigl(t,y,Y(t,y)\bigr)  \bigr)\ev_0,\label{VKdecomp3}
\end{gather}
while we recall from \eqref{WKdef} that the derivative $\Dc_j V^K$ is determined by
\begin{equation} \label{VKdecomp4}
\Dc_j V^K(t,y) = W_j^K(t,y).
\end{equation}  
We further observe from \eqref{Lscdef} and \eqref{flowassump1} that the map $\Lsc$ can be expressed as
\begin{equation} \label{Lscmap}
\Lsc = \frac{1}{t^\ep}\check{\Ftt}\bigl(t,y,Y(t,y)\bigr).
\end{equation}

Now, setting
\begin{equation*}
X=(X^K),
\end{equation*}
we can use \eqref{VKdecomp1}-\eqref{VKdecomp3} along with \eqref{GcK0def}-\eqref{GcK1def} to write the source
term \eqref{KcKdef} as
\begin{align*}
\Kc^K = - \frac{1}{t^{\frac{1}{2}+\kappa+\nu}}&\Pbb B^\Sigma(t,y) W_\Sigma^K(t,y) +\frac{1}{t^{\nu+\ep}}\Ftt^K\bigl(t,y,Y(t,y) \bigr)
\Pbb\Cc(t)\ev_0 + \Pbb\Cc(t) X^K(t,y) \\
+& \frac{1}{t^{\nu+2\ep}}
 \Pbb \Gc^K_0\Bigl(t^{\frac{1}{2}},t,\chi(\rho)\rho^m,\Ftt\bigl(t,y,Y(t,y)\bigr)\ev_0 ,\Ftt\bigl(t,y,Y(t,y)\bigr)\ev_0 \Bigr) \\
 +&  \sum_{a=0}^1\biggl\{ \frac{1}{t^{\frac{a}{2}+\ep}}\biggr[
 \Pbb \Gc^K_a\Bigl(t^{\frac{1}{2}},t,\chi(\rho)\rho^m,\Ftt\bigl(t,y,Y(t,y)\bigr)\ev_0 ,X(t,y)\Bigr) \\
 +& \Pbb \Gc^K_a\Bigl(t^{\frac{1}{2}},t,\chi(\rho)\rho^m,X(t,y),\Ftt\bigl(t,y,Y(t,y)\bigr)\ev_0 \Bigr)   \biggr] +  \frac{1}{t^{\frac{a}{2}-\nu}}\Pbb \Gc^K_a\Bigl(t^{\frac{1}{2}},t,\chi(\rho)\rho^m,X(t,y),X(t,y)\Bigr)   \biggr\}.
\end{align*}
Using \eqref{VKdecomp1}-\eqref{Lscmap} to similarly express the source terms $\Hc^K_j$ and $\Lsc\Gsc$, see   \eqref{HcKdef} and \eqref{Lscdef}, in  
terms of $W^K_j$, $X^K$ and $Y^K$, it is then not difficult, with the help of \eqref{flowassump2},
\eqref{A1bnd} and the assumptions $\ep,\kappa,\nu\geq 0$, that we can expand the source 
term \eqref{Jcdef} as
\begin{align}
\Jc = \biggl(\frac{1}{t^{3\ep}}+\frac{1}{t^{\nu+2\ep}}+\frac{1}{t^{1-\kappa+2\ep}}\biggr)\Jc_0\bigl(&t,y,Z(t,y)\bigr) + \biggl(\frac{1}{t^{\frac{1}{2}+\kappa+\ep}}+\frac{1}{t^{\frac{1}{2}+2\ep-\nu}}\biggr) \Jc_1\bigl(t,y,Z(t,y)\bigr) \notag \\
&+ \frac{1}{t}\bigl(\sigma + t^{\frac{1}{2}-\kappa-\nu}+
t^{\frac{1}{2}-\ep}
+ t^{\frac{1}{2}-\kappa-\ep}+t^{2\nu-\ep}\bigr)\Jc_2\bigl(t,y,Z(t,y)\bigr)
\label{JcexpA}
\end{align}
where  $\Jc_a\in C^0([0,1], C^\Ntt(\Sc\times B_R(\Wbb),\Wbb))$, $a=0,1,2$, for any fixed $\Ntt\in \Zbb_{\geq 0}$, and these maps satisfy\footnote{Here, we are using are the order notation $\Ord(\cdot)$ from \cite[\S 2.4]{BOOS:2020} where the maps are
finitely rather than infinitely differentiable.}
\begin{gather}
\Jc_0 = \Ord(Z), \quad \Jc_1 = \Ord(\Pi Z), \quad
\Pi\Jc_2  = \Ord(\Pi Z) \AND
\Pi^\perp\Jc_2  = \Ord(\Pi Z\otimes \Pi Z). \label{Jcord} 
\end{gather}

To proceed, we choose the constants $\kappa,\nu \in \Rbb_{>0}$ to satisfy the inequalities
\begin{equation} \label{kappa-nu-fix1}
2\ep < \kappa < 1-\ep, \quad \kappa +\nu < \frac{1}{2}-\ep, \quad  \ep < 2\nu \AND \kappa \leq \frac{1}{3},
\end{equation}
which is possible since in the following we assume that the asymptotic assumptions are satisfied for some $\ep \in (0,1/10)$. For example,
if $\ep=1/11$, we could choose $\kappa=5/22$ and $\nu =1/11$.
Now, it is not difficult to verify that \eqref{kappa-nu-fix1} implies the inequalities
\begin{gather*}
3\ep \leq 1-\kappa + 2\ep, \quad \nu+2\ep \leq 1-\kappa + 2\ep, \quad  0<2\nu-\ep, \quad 
0<\frac{1}{2}-\kappa-\ep, \quad  0<\frac{1}{2}-\kappa-\nu,  \\
\frac{1}{2}+2\ep -\nu \leq 1-\frac{\kappa}{2}+\ep, \quad \frac{1}{2}+\kappa+\ep \leq 1-\frac{\kappa}{2}+\ep
\AND 0<\kappa - 2\ep \leq 1,
\end{gather*}
and that, with the help of these inequalities, we can, after suitably redefining the maps
$\Jc_a$, rewrite \eqref{JcexpA} as
 \begin{align}
\Jc = \frac{1}{t^{1-\kappa+2\ep}}\Jc_0\bigl(t,y,Z(t,y)\bigr)+\frac{1}{t^{1-\frac{\kappa}{2}+\ep}}\Jc_1\bigl(t,y,Z(t,y)\bigr) + \frac{1}{t}\bigl(\sigma + t^{\tilde{\ep}}\bigr)\Jc_2\bigl(t,y,Z(t,y)\bigr)
\label{JcexpB}
\end{align}
for some suitably small constant $\tilde{\ep}>0$.
Here, the constant $\sigma >0$ can be chosen as small as we like, and the redefined maps  $\Jc_{a}$ have the same smoothness
properties as above and satisfy \eqref{Jcord}.

\begin{rem} \label{Jcrem}
The point of the expansion \eqref{JcexpB} is that source term $\Jc$
satisfies all the assumptions from Section 3.1.(iii) of \cite{BOOS:2020} except for the following:
\begin{enumerate}
\item the differentiablity of each of the maps $\Jc_a$ is finite,
\item and $\Jc_2$ does not satisfy $\Pi\Jc_2=0$.
\end{enumerate}  
Neither of these exceptions pose any difficulties and are easily dealt with. To see why the first exception is not problematic,
we observe from arguments of \cite{BOOS:2020} that all of the results of that paper
are valid provided that the order of the differentiability of the source term is greater than $n/2+3$, where $n$ is the dimension of
the spatial manifold. Since the spatial manifold we are considering, i.e. $\Sc$, is 3-dimensional and we have established above that the maps  $\Jc_a$ are $\Ntt$-times differentiable for
any $\Ntt \in \Zbb_{\geq 0}$, it follows by taking $\Ntt > 3/2+3$ that the finite differentiability is no obstruction to applying the
results from \cite{BOOS:2020} to the Fuchsian equation \eqref{MwaveO}. In regards to the second exception, we note,  since 
$\Pi \Jc_2= \Ord(\Pi Z)$,  that the term $\frac{1}{t}(\sigma+t^{\tilde{\epsilon}})\Pi \Jc_2$ can be absorbed into
the term $\frac{1}{t}\Ac\Pi Z$ on the right hand side of the Fuchsian equation \eqref{MwaveO} via a redefinition of the operator 
$\Ac$. 
Due to the factor $\sigma+t^{\tilde{\epsilon}}$,
we can ensure, for any choice of
$\tilde{\kappa}\in (0,\kappa)$, that the redefined matrix $\Ac$ would satisfy for all
$t\in (0,t_0]$ an inequality of the form \eqref{kappabnd} with $\kappa$
replaced by $\tilde{\kappa}$ provided that $\sigma$ and $t_0$ are chosen sufficiently small. After doing this, the redefined $\Jc_2$ would
satisfy $\Pi\Jc_2=0$ as required and the source term $\Jc$ would satisfy all the assumptions needed to apply the existence theory from \cite{BOOS:2020}. 
\end{rem}

\subsubsection{The source term $\Qc$:} We now analyze the nonlinear term \eqref{Qcvecdef} (see also \eqref{Qcdef}) in more detail.
Recalling that the $\chi\rho^m \bb^K_{IJ}$ are smooth functions on $\Sc$, we can, with the help of
the product estimate \cite[Ch.~13, Prop.~3.7.]{TaylorIII:1996} and
H\"{o}lder's inequality, estimate $\Qc$  for any $s\in \Zbb_{\geq 0}$ by
\begin{align}
\norm{\Qc}_{H^s(\Sc)} &\lesssim t^{\kappa}\bigl( \norm{\Dc(V_0 V_0)}_{L^\infty(\Sc)}+ 
\norm{\Dc(V_0 V_0)}_{H^s(\Sc)}\bigr)
\notag \\
&\lesssim   t^{\kappa}\bigl(  \norm{V_0}_{L^\infty(\Sc)} \norm{\Dc V_0}_{L^\infty(\Sc)} +\norm{V_0}_{L^{\infty}(\Sc)} 
\norm{\Dc V_0}_{H^{s}(\Sc)} + \norm{\Dc V_0}_{L^{\infty}(\Sc)}\norm{V_0}_{L^{s}(\Sc)}\bigr)  \notag \\
& \lesssim   \norm{V_0}_{L^\infty(\Sc)} \norm{W}_{L^\infty(\Sc)} +\norm{V_0}_{L^{\infty}(\Sc)} 
\norm{W}_{H^{s}(\Sc)} + \norm{W}_{L^{\infty}(\Sc)}\norm{V_0}_{L^{s}(\Sc)}. \label{Qcbnd1}
\end{align}
Next, for $k\in\Zbb_{> 3/2}$, we let $C_{\text{Sob}}$ denote the constant that appears in the Sobolev inequality 
\cite[Ch.~13, Prop.~2.4.]{TaylorIII:1996}, that is,
\begin{equation} \label{Sobolev}
\norm{\ftt}_{L^\infty(\Sc)} \leq C_{\text{Sob}} \norm{\ftt}_{H^{k}(\Sc)}.
\end{equation} 
Then by \eqref{Ydef}, the flow bounds \eqref{flowassump.1}-\eqref{flowassump.2}, and the Sobolev and H\"{o}lder inequalities, we see 
that the inequalities
\begin{gather} 
\norm{V_0}_{L^\infty(\Sc)}+\norm{V_0}_{L^2(\Sc)}\lesssim \omega(R) \label{V0bnd}
\intertext{and}
\norm{V_0}_{L^s(\Sc)} \lesssim  \norm{V_0}_{L^2(\Sc)} + \norm{\Dc V_0}_{L^s(\Sc)} \lesssim 
\omega(R) + \norm{W}_{L^s(\Sc)},\quad s\in \Zbb_{\geq 1},  \label{V0bndAa}
\end{gather}
hold for all $t\in (0,1]$ and $\norm{Y}_{H^k}\leq R/C_{\text{Sob}}$. Using these estimates, Sobolev's inequality
and  the estimate $\norm{W}_{L^\infty(\Sc)}\lesssim \norm{W}_{L^2(\Sc)}$, which follows from H\"{o}lder's inequality, we find from setting
$s=0$ and $s=k$ in \eqref{Qcbnd1} that
\begin{gather}
\norm{\Qc}_{L^2(\Sc)}  \lesssim \omega(R) \norm{W}_{L^{2}(\Sc)} \lesssim \omega(R) \norm{\Pi Z}_{L^{2}(\Sc)} \label{Qcbnd2}
\intertext{and}
\norm{\Qc}_{H^k(\Sc)}  \lesssim \bigl( \omega(R)+ \norm{W}_{H^{k}(\Sc)}\bigr) \norm{W}_{H^{k}(\Sc)}
\lesssim \bigl( \omega(R)+  R \bigr) \norm{\Pi Z}_{H^{k}(\Sc)} \label{Qcbnd3}
\end{gather}
for all  $\norm{Z}_{H^k(\Sc)} \leq R/C_{\text{Sob}}$.
We further observe from \eqref{Pidef} and \eqref{Qcdef} that 
\begin{equation}\label{Qcbnd4}
\Pi \Qc = \Qc.
\end{equation}

\begin{rem} \label{Qcrem}
The importance of the estimates \eqref{Qcbnd2}-\eqref{Qcbnd3} and the identity \eqref{Qcbnd4} is that, by an obvious 
modification of the proof of Theorem 3.8.~in \cite{BOOS:2020}, these results show that terms in 
the energy estimates for the Fuchsian equation \eqref{MwaveO} that arise due to the ``bad'' singular term $\frac{1}{t}\Qc$ can be controlled using 
the ``good'' singular
$\frac{1}{t}\Asc\Pi Z$ by choosing $\omega(R)+R$ sufficiently small, which we can do by choosing $R$ suitably small since $\lim_{R\searrow 0}\omega(R)=0$
by assumption.
\end{rem}

\section{Existence}

\begin{thm} \label{existthm}
Suppose $k\in \Zbb_{\geq 5}$, $\rho_0>0$, the asymptotic flow assumptions \eqref{flowassump.1}-\eqref{flowassump.2} are satisfied
for constants $\Ntt \in \Zbb_{\geq k}$, $R_0>0$ and $\ep \in (0,1/10)$, the constants $\kappa,\nu\in \Rbb_{>0}$
satisfy the inequalities \eqref{kappa-nu-fix1}, and $\zc \in (0,\kappa)$. Then there exist constants $m\in \Zbb_{\geq 1}$ and $\delta>0$ such
that for any $\mathring{V}=(\mathring{V}^K)\in H^{k+1}(\Sc,\Vbb^N)$ satisfying $\norm{\mathring{V}}_{H^{k+1}(\Sc)}<\delta$, there
exists a unique solution
\begin{equation*} 
V=(V^K) \in C^0\bigl((0,1],H^{k+1}(\Sc,\Vbb^N)\bigr)\cap C^1\bigl((0,1],H^{k}(\Sc,\Vbb^N)\bigr)
\end{equation*}
to the GIVP \eqref{MwaveJ.1}-\eqref{MwaveJ.2} for the extended system.   Moreover, the following hold:
\begin{enumerate}[(a)]
\item The  solution $V$ satisfies the bounds
\begin{gather*}
\norm{V_0(t)}_{L^\infty(\Sc)} \lesssim 1, \quad
\norm{V_0(t)}_{H^k(\Sc)}  \lesssim \frac{1}{t^{\ep}}, \quad 
\norm{\Pbb V(t)}_{H^k(\Sc)}  \lesssim t^{\nu},  \\
\norm{\Dc V(t)}_{H^k(\Sc)}  \lesssim \frac{1}{t^{\kappa}}, \quad
\norm{\Pbb V(t)}_{H^{k-1}(\Sc)}  \lesssim t^{\nu+\kappa-\zc} \AND
\norm{\Dc V(t)}_{H^{k-1}(\Sc)}  \lesssim \frac{1}{t^{\zc}}
\end{gather*}
for $t\in (0,1]$.
Additionally, there exists an element $Z^\perp \in H^{k-1}(\Sc,\Wbb)$ satisfying $\Pbb^\perp Z^\perp_0=Z^\perp_0$ such that
\begin{equation*}
\norm{\Pi Z(t)}_{H^{k-1}(\Sc)} + \norm{\Pi^\perp Z(t)-Z^\perp }_{H^{k-1}(\Sc)} \lesssim t^{\kappa-\zc}
\end{equation*}
for $t\in (0,1]$ where $Z$ is determined from $V$ by \eqref{Zdef}. 
\item If, additionally, the initial data $\mathring{V}$ is chosen so that the constraint \eqref{idataconstraint} is satisfied, then the solution $V$
determines a unique classical solution $\ub^K \in C^2(\Mb_{r_0})$,
with $r_0=\rho_0^m$, of the IVP
\begin{align*}
\gb^{\alpha\beta}\nablab_\alpha \nablab_\beta \ub^K &= \ab^{K\alpha\beta}_{IJ}\nablab_\alpha \ub^I \nablab_\beta \ub^J
\quad \text{in $\Mb_{r_0}$,}\\
(\ub^K, \del{\tb}\ub^K) &= (\vb^K,\wb^K_1) \hspace{1.5cm} \text{in $\Sigmab_{r_0}$,}
\end{align*}
where $\ub^K$, $\vb^K$ and $\wb^K$ are determined from $V$ by \eqref{VK2ubK}, \eqref{idatatransB} and \eqref{idatatransC}. Furthermore, the $\ub^K$ satisfy the pointwise bounds
\begin{equation*}
|\ub^K|\lesssim \frac{\rb}{\rb^2-\tb^2}\biggl(1-\frac{\tb}{\rb}\biggr)^{\frac{1}{2}+\nu+\kappa-\zc} \quad \text{in $\Mb_{r_0}$.}
\end{equation*} 
\end{enumerate}
\end{thm}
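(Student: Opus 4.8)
The plan is to deduce Theorem~\ref{existthm} from Theorem~3.8 of \cite{BOOS:2020} applied to the Fuchsian system \eqref{MwaveO}, and then to unwind the chain of transformations back to \eqref{Mbwave}. First I would assemble the structural facts established in Section~\ref{coefprops}: the operators $A^0$, $A^1$, $A^\Sigma n_\Sigma$, $\Ac$ are symmetric with respect to $\hc$, $A^0$ obeys \eqref{gammabnd}, $\Ac$ obeys \eqref{kappabnd} (which is where $\nu,\kappa\ge 0$ and $\kappa+\nu\le 1/2$ from \eqref{kappa-nu-fix1} enter), $\Pi$ is a symmetric projection with the commutation properties \eqref{Piproj}--\eqref{Picom}, and, after fixing $m=m(\sigma)$ large, \eqref{A1bnd} holds with $\sigma$ as small as desired. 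The source $\Jc$ has the form \eqref{JcexpB} with the order properties \eqref{Jcord}, and the singular forcing $\Qc$ satisfies \eqref{Qcbnd2}--\eqref{Qcbnd4}. By Remarks~\ref{Jcrem} and~\ref{Qcrem}, the only deviations from the hypotheses of \cite{BOOS:2020} --- the finite differentiability of the $\Jc_a$, the failure of $\Pi\Jc_2=0$, and the extra forcing $\frac1t\Qc$ --- are removed by taking $\Ntt>n/2+3$ with $n=\dim\Sc=3$, by absorbing $\frac1t(\sigma+t^{\tilde\ep})\Pi\Jc_2$ into a redefinition of $\Ac$, and by choosing the smallness parameter $R$ (hence the data size $\delta$) small enough that $\omega(R)+R$ is controlled. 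With \eqref{kappa-nu-fix1} guaranteeing $0<\kappa-2\ep\le 1$ and the remaining exponent inequalities, an application of Theorem~3.8 of \cite{BOOS:2020} then produces, for $\|\mathring Z\|_{H^k}$ sufficiently small, a unique $Z\in C^0((0,1],H^{k-1})\cap\cdots$ solving \eqref{MwaveO} with prescribed data at $t=1$, together with the decay estimate $\|\Pi Z(t)\|_{H^{k-1}}\lesssim t^{\kappa-\zc}$ and a limit $Z^\perp\in H^{k-1}$ with $\Pbb^\perp Z^\perp_0=Z^\perp_0$ and $\|\Pi^\perp Z(t)-Z^\perp\|_{H^{k-1}}\lesssim t^{\kappa-\zc}$, for any $\zc\in(0,\kappa)$.

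Next I would reconstruct $V$ from $Z=(W^K_j,X^K,Y)$, see \eqref{Zdef}, by setting $\Pbb V^K:=t^\nu X^K$ (undoing \eqref{XKdef}) and $V^K_0:=\Fsc^K(t,1,y,Y^K)$ (undoing \eqref{Ydef}), so $\Pbb^\perp V^K=V^K_0\ev_0$. The flow bounds \eqref{flowassump.1}--\eqref{flowassump.2} from Proposition~\ref{asympprop}, applied with $\|Y\|_{H^k}\le R/C_{\text{Sob}}$, give $V\in C^0((0,1],H^{k+1})\cap C^1((0,1],H^k)$ --- the extra derivative over $Z$ being recovered from $\Dc V=t^{-\kappa}W$ once consistency is established --- and yield the bounds in item~(a): $\|V_0(t)\|_{L^\infty}\lesssim\omega(R)\lesssim1$, $\|V_0(t)\|_{H^k}\lesssim t^{-\ep}$, $\|\Pbb V(t)\|_{H^k}=t^\nu\|X(t)\|_{H^k}\lesssim t^\nu$, $\|\Dc V(t)\|_{H^k}=t^{-\kappa}\|W(t)\|_{H^k}\lesssim t^{-\kappa}$, with the lower-regularity refinements with exponents $\nu+\kappa-\zc$ and $-\zc$ coming from $\|\Pi Z(t)\|_{H^{k-1}}\lesssim t^{\kappa-\zc}$ via \eqref{WKdef} and $\Pbb V=t^\nu X$. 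The crucial point to verify is the \emph{consistency} $W^K_j=t^\kappa\Dc_j V^K$ and that the reconstructed $V$ then solves the GIVP \eqref{MwaveJ.1}--\eqref{MwaveJ.2}: by construction the equations \eqref{MwaveM} and \eqref{MwaveN} contained in \eqref{MwaveO} are the $Y$- and $\Pbb$-versions of \eqref{MwaveI} with $\Dc V$ replaced by $t^{-\kappa}W$, so it suffices to show that the constraint field $C^K_j:=W^K_j-t^\kappa\Dc_j V^K$ vanishes. It satisfies $C^K_j(1)=0$ and, upon re-deriving \eqref{MwaveK} from the reconstructed first-order system, a homogeneous linear symmetric-hyperbolic Fuchsian equation of the same type; hence $C\equiv0$ by the uniqueness half of the existence theory. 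This is the step I expect to be the main obstacle, since it requires running the differentiation argument of Section~\ref{complete:sec} backwards and checking that the resulting homogeneous system still satisfies the hypotheses of \cite{BOOS:2020}.

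For item~(b), assume in addition that $\mathring V$ obeys the constraint \eqref{idataconstraint}; by the discussion of Section~\ref{extendedsec} this is exactly the compatibility ensuring that the first-order variables $V_1,V_\Lambda$ are genuine derivatives of $V_4$. I would show these two constraint quantities propagate in $t$ by the same vanishing-of-a-homogeneous-linear-Fuchsian-system argument, again invoking uniqueness. It then follows that $u^K:=t^{-1/2}V_4^K$, see \eqref{VK2uK}, solves the conformal wave equation \eqref{MwaveB} on $M_{r_0}$; since $H^{k+1}(\Sc)\hookrightarrow C^{k-1}(\Sc)$ with $k\ge5$ and the $t$-regularity is upgraded by repeatedly differentiating \eqref{MwaveB}, $u^K$ is a classical solution there, in particular $C^2$. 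Transferring through $\psi^{-1}$ and the conformal factor via \eqref{utK2uK} yields the explicit formula \eqref{VK2ubK} for $\ub^K$, which by Section~\ref{extendedsec} is independent of the extension of $\mathring V$ off $\Sigma_{r_0}$ and solves \eqref{Mbwave} on $\Mb_{r_0}$ with data $(\vb^K,\wb^K)$ as in \eqref{idatatransB}--\eqref{idatatransC}; the $C^2$ regularity on $\Mb_{r_0}$ descends from that of $u^K$ since $\psi$ is a diffeomorphism onto $M_{r_0}$. Finally, the pointwise bound follows from \eqref{VK2ubK}, the boundedness of $1+\tb/\rb$ on $\Mb_{r_0}$, and $|V_4^K(t,\cdot)|\le\|\Pbb V(t)\|_{L^\infty(\Sc)}\lesssim\|\Pbb V(t)\|_{H^{k-1}(\Sc)}\lesssim t^{\nu+\kappa-\zc}$ evaluated at $t=1-\tb/\rb$, giving the factor $(1-\tb/\rb)^{1/2+\nu+\kappa-\zc}$.
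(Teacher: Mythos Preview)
Your plan is sound in outline, but the paper proceeds by a different and somewhat simpler route that sidesteps the constraint-propagation issue you flag as ``the main obstacle.'' Rather than applying the existence part of Theorem~3.8 of \cite{BOOS:2020} directly to \eqref{MwaveO} and then reconstructing $V$ from $Z$, the paper first obtains a local-in-time solution $V$ of the extended system \eqref{MwaveJ.1}--\eqref{MwaveJ.2} on $(t^*,1]$ by standard symmetric-hyperbolic theory, \emph{then defines} $Z=(t^\kappa\Dc V,\,t^{-\nu}\Pbb V,\,Y)$ from this $V$ via \eqref{WKdef}, \eqref{XKdef}, \eqref{Ydef}. Because $Z$ is built out of an honest solution $V$, the relation $W^K_j=t^\kappa\Dc_jV^K$ holds by construction and no propagation argument is needed. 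The Fuchsian structure is used only at the level of a priori \emph{energy estimates}: the paper invokes the estimates from the proof of Theorem~3.8 (rather than its existence statement) to bound $\norm{Z(t)}_{H^k}$ uniformly on $(t_*,t_0]$, combines this with a Cauchy-stability argument near the trivial solution to make $t_0$ small, and then shows via \eqref{V0bndA}--\eqref{PbbVbndA} that $\norm{V(t)}_{H^k}$ stays finite, forcing $t^*=0$ by the continuation principle.

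The trade-off is this: your approach treats \eqref{MwaveO} as an autonomous Fuchsian IVP in $Z$, which is conceptually clean but forces you to close the loop by showing the differentiated component $W$ really is $t^\kappa\Dc V$; as you anticipate, this requires deriving a homogeneous Fuchsian system for the defect $C^K_j$ and checking it still meets the hypotheses of \cite{BOOS:2020}, a nontrivial additional computation. The paper's continuation argument avoids that entirely at the cost of a slightly more hands-on bootstrap (introducing the auxiliary times $t_*$, $t_0$, appealing to Cauchy stability, and extracting only the energy inequality from \cite{BOOS:2020}). Both routes lead to the same bounds in part~(a) and the same passage to \eqref{Mbwave} in part~(b); your derivation of the pointwise decay of $\ub^K$ from $\norm{\Pbb V(t)}_{H^{k-1}}\lesssim t^{\nu+\kappa-\zc}$ via \eqref{VK2ubK} matches the paper's.
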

\begin{proof}
$\;$\\
\noindent\underline{Existence and uniqueness for the extended system:}
Having established that the extended system \eqref{MwaveJ.1} is symmetric hyperbolic, we can, 
since $k>3/2+1$ by assumption, appeal to standard local-in-time
existence and uniqueness results for symmetric hyperbolic systems, e.g. \cite[Ch.~16, Prop.~1.4.]{TaylorIII:1996}, to
conclude the existence of a $t^*\in [0,1)$, which we take to be \textit{maximal}, and a unique solution 
\begin{equation} \label{Vreg}
V=(V^K) \in C^0\bigl((t^*,1],H^{k+1}(\Sc,\Vbb^N)\bigr)\cap C^1\bigl((t^*,1],H^{k}(\Sc,\Vbb^N)\bigr)
\end{equation}
to the IVP \eqref{MwaveJ.1}-\eqref{MwaveJ.2} for given initial data $\mathring{V}=(\mathring{V}^K)\in H^{k+1}(\Sc,\Vbb^N)$, where the maximal time $t^*$ depends on $\mathring{V}$. Next, by \eqref{Ydef}, we have that
\begin{equation*}
Y|_{t=1} =\mathring{V}_0=(\mathring{V}^K_0). 
\end{equation*}
From this, \eqref{WKdef}, \eqref{XKdef}  and \eqref{Zdef},  we see, by choosing the initial data
to satisfy $\norm{\mathring{V}}_{H^{k+1}(\Sc)} < \delta$,
that $\norm{Z(1)}_{H^{k}(\Sc)} < \mathring{C}\delta$
for some positive constant $\mathring{C}>0$ that is independent of $\delta$. We then
fix $R\in (0,R_0]$ and choose $\delta$ small enough to satisfy
\begin{equation} \label{deltabnd}
 \delta < \frac{R}{8 \mathring{C}C_{\text{Sob}}}
\end{equation}
so that
\begin{equation}  \label{Zidatathm}
\norm{Z(1)}_{H^{k}(\Sc)} < \mathring{C}\delta <  \frac{R}{8C_{\text{Sob}}}.
\end{equation}
For $Z$ to be well-defined, it is enough for $Z$ to satisfy
\begin{equation} \label{Zwelldefbnd}
\norm{Z}_{H^k(\Sc)} \leq   \frac{R}{2 C_{\text{Sob}}}.
\end{equation}
This is because this bound will ensure by Sobolev's inequality \eqref{Sobolev} that
\begin{equation*}
\norm{Y}_{L^\infty} \leq C_{\text{Sob}}\norm{Y}_{H^K(\Sc)} \leq C_{\text{Sob}}\norm{Z}_{H^k(\Sc)} \leq \frac{R}{2} <R < R_0,
\end{equation*}
which, by the flow assumptions \eqref{flowassump.1}-\eqref{flowassump.2},
will guarantee that the change of variables \eqref{Ydef} is well-defined and invertible, and hence that $Z$ is well-defined by \eqref{WKdef}, \eqref{XKdef} and \eqref{Zdef}.

To proceed, we let $t_* \in (t^*,0)$ denote the first time such that
\begin{equation} \label{t*def} 
\norm{Z(t_*)}_{H^{k}(\Sc)} =  \frac{R}{2 C_{\text{Sob}}},
\end{equation}
and if there is no such time, then we set $t_*=t^*$. We note that $t_*$ is well-defined by \eqref{deltabnd} and \eqref{Zidatathm},
and  we further note from \eqref{Vreg} and the definition of $Z$ that
\begin{equation*}
Z\in C^0\bigl((t_*,1],H^{k}(\Sc,\Wbb)\bigr)\cap C^1\bigl((t_*,1],H^{k-1}(\Sc,\Wbb)\bigr).
\end{equation*}
Now, since $\Fc(t,1,y,0)=0$ by virtue of $\xi=0$ being a solution of the asymptotic equation \eqref{asympeqn}, it is not difficult to verify that the symmetric hyperbolic equations \eqref{MwaveJ.1} and \eqref{MwaveO} both admit the trivial solution. Because of \eqref{Zidatathm}, we can therefore appeal to the Cauchy stability
property enjoyed by symmetry hyperbolic equations to conclude, by choosing $\delta$ small enough, that $t_*$, where of course $t_*\geq t^*$,
can be made to be as small as we like and that the inequality 
\begin{equation} \label{Zt0bnd}
\max_{t_0\leq t\leq 1}\norm{Z(t)}_{H^{k}(\Sc)} < 2\mathring{C}\delta <  \frac{R}{4 C_{\text{Sob}}}
\end{equation}
is valid for
\begin{equation*}
t_0 = \min\{2 t_*,1/2\}.
\end{equation*}
Recalling that we are free to choose the constant $\sigma>0$, see \eqref{A1bnd}, as small as we like by choosing the constant $m\in \Zbb_{\geq 1}$
sufficiently large, we can, for any given $\sigma_*>0$, arrange, since $\tilde{\ep}>0$ (see \eqref{JcexpB}), that 
\begin{equation} \label{sigmabnd}
\sigma + t^{\tilde{\ep}} < \sigma_*, \quad t\in (0,t_0],
\end{equation}
by choosing $\delta$ small enough to guarantee that $t_0$ is sufficiently small to ensure that this inequality holds. 

In light of Remarks \ref{Jcrem} and \ref{Qcrem}, 
the bounds  \eqref{gammabnd}, \eqref{kappabnd}, \eqref{A1bnd}, and \eqref{sigmabnd},  the relations \eqref{Piproj}-\eqref{PiperpAproj},
the expansion \eqref{JcexpB}, and the estimates \eqref{Qcbnd2}-\eqref{Qcbnd3}, all taken together, show that if the constants $m\in \Zbb_{\geq 1}$ and $\delta>0$ are chosen sufficiently large and small, respectively, and the constants $\kappa,\nu$ are chosen to satisfy \eqref{kappa-nu-fix1}, then the Fuchsian system \eqref{MwaveO}, which $Z$ satisfies, will, after the simple time transformation $t\mapsto -t$, satisfy all the required
assumptions needed to apply the time rescaled version, see \cite[\S 3.4.]{BOOS:2020} and the remark below, of Theorem 3.8.~from \cite{BOOS:2020}.

\begin{rem}  \label{constantsrem}
From the discussion from Section 3.4.~of
\cite{BOOS:2020} and Section \ref{coefprops} of this article, it not difficult to see that the appropriate rescaling
power $p$, see equation (3.106) in \cite{BOOS:2020}, in the current context is
\begin{equation} \label{rescaledef}
p= \kappa-2\ep, 
\end{equation}
which, we note, by \eqref{kappa-nu-fix1}, satisfies the required bounds $0<\kappa-2\ep\leq 1$. We further note from Theorem 3.8.~ from \cite{BOOS:2020}, see also \cite[\S 3.4.]{BOOS:2020},
that parameter
$\zeta$ defined by equation (3.59) of \cite{BOOS:2020}, which is involved in determining the decay of solutions, is,
in the current context, determined by
\begin{equation} \label{zetadef}
\zeta = \kappa - \zc
\end{equation} 
where $\zc>0$ can be made as small as we like by choosing the constant $m$ large enough and the constants $R,t_0$ small
enough to ensure that  $\sigma_*$ and $\norm{Z}_{H^K(\Sc)}$  are sufficiently small.
\end{rem}
We therefore
conclude from the proof of Theorem 3.8.~from \cite{BOOS:2020} that $Z$, which solves \eqref{MwaveO}, satisfies an energy estimate of the form
\begin{equation} \label{Zenergy}
\norm{Z(t)}_{H^k(\Sc)}^2 + \int^{t_0}_{t} \frac{1}{\tau} \norm{\Pi Z(\tau)}^2_{H^k(\Sc)}\, d\tau
\leq C_E^2\norm{Z(t_0)}^2
\end{equation}
for all $t\in (t_*,t_0]$. By Gr\" {o}nwall's inequality and \eqref{Zidatathm}, we then have
\begin{equation} \label{Zbnd1}
\sup_{t\in (t_*,t_0)}\norm{Z(t)}_{H^k(\Sc)} \leq e^{C_E(t_*-t_0)} \norm{Z(t_0)}_{H^k(\Sc)} < e^{C_E(t_*-t_0)}\mathring{C}\delta.
\end{equation} 
Choosing $\delta$ now, by shrinking it if necessary, to satisfy
$\delta < \frac{R}{3\mathring{C}C_{\text{Sob}}e^{C_E(t_*-t_0)}}$
in addition to \eqref{deltabnd}, the bounds \eqref{Zt0bnd} and \eqref{Zbnd1} implies that
\begin{equation} \label{Zbnd2}
\sup_{t\in (t_*,1)}\norm{Z(t)}_{H^k(\Sc)} <  \frac{R}{3C_{\text{Sob}}}.
\end{equation}
From this inequality and the definition \eqref{t*def} for $t_*$, we conclude that $t_*=t^*$.

Now, from \eqref{flowassump1}, \eqref{flowassump2}, Sobolev's inequality, and the Moser estimates (e.g. \cite[Ch.~13, Prop.~3.9.]{TaylorIII:1996}),
we see from \eqref{Ydef} and \eqref{Zdef} that $V_0$ can be bounded by
\begin{equation} \label{V0bndA}
\norm{V_0(t)}_{H^k(\Sc)} \leq \frac{1}{t^\ep}C(\norm{Z(t)}_{H^k(\Sc)})\norm{Z(t)}_{H^k(\Sc)}
\end{equation}
for $Z$ satisfying \eqref{Zwelldefbnd},
while we see from \eqref{XKdef}, \eqref{Zdef} and \eqref{Pidef} that $\Pbb V(t)$ is bounded by
\begin{equation}\label{PbbVbndA}
\norm{\Pbb V(t)}_{H^s(\Sc)} \leq t^\nu \norm{\Pi Z(t)}_{H^s(\Sc)}, \quad s\in \Zbb_{\geq 0}.
\end{equation}
Since $t_*=t^*$, the estimates \eqref{Zbnd2}, \eqref{V0bndA} and \eqref{PbbVbndA} imply that  
$\norm{V(t)}_{H^k(\Sc)}$ is
finite for any $t\in (t^*,0)$. By the maximality of $t^*$ and the continuation principle for symmetric hyperbolic equations, we conclude that $t^*=0$,
which establishes the existence of solutions to the extended IVP \eqref{MwaveJ.1}-\eqref{MwaveJ.2} on the spacetime
region $(0,1]\times \Sc$.

\bigskip

\noindent\underline{Uniform bounds for $V$:}
From \eqref{WKdef}, \eqref{Zdef}, \eqref{V0bnd}, \eqref{Zbnd2}, \eqref{V0bndA} and  \eqref{PbbVbndA}, we see that the estimates
\begin{gather*}
\norm{V_0(t)}_{L^\infty(\Sc)} \lesssim \omega(\delta), \quad
\norm{V_0(t)}_{H^k(\Sc)}  \lesssim \frac{1}{t^{\ep}}\delta, \quad 
\norm{\Pbb V(t)}_{H^k(\Sc)}  \lesssim t^{\nu}\delta \AND
\norm{\Dc V(t)}_{H^k(\Sc)}  \lesssim \frac{1}{t^{\kappa}}\delta
\end{gather*}
hold for $t\in (0,1]$. Furthermore, in view of the Remark \ref{constantsrem}, see in particular, \eqref{zetadef},  the coefficient properties from Section \ref{coefprops}, and the fact that $\kappa \in (0,1/3]$, we conclude from Theorem 3.8.~and Section 3.4.~of \cite{BOOS:2020} that, for any
fixed $\zc>0$, there exists, provided that  $m$ and $\delta$ are chosen sufficiently large and small respectively, an element $Z^\perp \in H^{k-1}(\Sc,\Wbb)$ satisfying $\Pbb^\perp Z^\perp_0=Z^\perp_0$
such that
\begin{equation*}
\norm{\Pi Z(t)}_{H^{k-1}(\Sc)} + \norm{\Pi^\perp Z(t)-Z^\perp }_{H^{k-1}(\Sc)} \lesssim t^{\kappa-\zc}
\end{equation*}
for $t\in (0,1]$. With the help of the above inequality, \eqref{WKdef}, \eqref{Zdef}, \eqref{Pidef} and \eqref{PbbVbndA},
we conclude that $V$ also satisfies
\begin{equation} \label{VdecayA}
\norm{\Pbb V(t)}_{H^{k-1}(\Sc)}  \lesssim t^{\nu+\kappa-\zc} \AND
\norm{\Dc V(t)}_{H^{k-1}(\Sc)}  \lesssim \frac{1}{t^{\zc}}
\end{equation}
for $t\in (0,1]$.

\bigskip

\noindent \underline{Existence for the wave equations \eqref{Mbwave}:} Letting $r_0=\rho_0^m$, we know from the discussion contained in Section \ref{extendedsec},
that if the initial data $\mathring{V}$ is chosen to satisfy the constraints \eqref{idataconstraint} on the spacelike hypersurface $\Sigma_{r_0}$, then the solution  $V=(V^K_0,V^K_1,V^K_\Lambda,V^K_4)$ to the extended system \eqref{MwaveJ.1} determines a classical
solution $\ub^K$ of the semilinear wave equations \eqref{Mbwave} on $\Mb_{r_0}$ via the formula
\eqref{VK2ubK}. Moreover, this solution is uniquely determined by the initial data on $\Sigma_{r_0}$ that is obtained from the restriction of the initial data $\mathring{V}$ to the initial hypersurface $\Sigma_{r_0}$ and the transformation formulas
\eqref{idatatransA} and \eqref{idatatransB}.
To complete the proof, we note from
\eqref{VK2uK}, Sobolev's inequality, the decay estimate \eqref{VdecayA}, and \eqref{tbrb2tr} that each $\ub^K$ satisfies the pointwise bound
\begin{equation*}
|\ub^K|\lesssim \frac{\rb}{\rb^2-\tb^2}\biggl(1-\frac{\tb}{\rb}\biggr)^{\frac{1}{2}+\nu+\kappa-\zc} \quad \text{in $\Mb_{r_0}$.}
\end{equation*}
\end{proof}

\begin{cor} \label{existcor}
Suppose $k\in \Zbb_{\geq 5}$, $\rho_0>0$, $\zc>0$ and the bounded weak null condition (see Definition \ref{bwnc}) holds. Then there exist constants $m\in \Zbb_{\geq 1}$ and $\delta>0$ such
that for any $\mathring{V}=(\mathring{V}^K)\in H^{k+1}(\Sc,\Vbb^N)$ satisfying $\norm{\mathring{V}}_{H^k(\Sc)}<\delta$, there
exists a unique solution
\begin{equation*} 
V=(V^K) \in C^0\bigl((0,1],H^{k+1}(\Sc,\Vbb^N)\bigr)\cap C^1\bigl((0,1],H^{k}(\Sc,\Vbb^N)\bigr)
\end{equation*}
to the IVP \eqref{MwaveJ.1}-\eqref{MwaveJ.2}.   Moreover, the following hold:
\begin{enumerate}[(a)]
\item  The  solution $V$ satisfies the uniform bounds
\begin{gather*}
\norm{V_0(t)}_{L^\infty(\Sc)} \lesssim 1, \quad
\norm{V_0(t)}_{H^k(\Sc)}+\norm{\Dc V(t)}_{H^k(\Sc)}  \lesssim \frac{1}{t^{\zc}} \AND 
\norm{\Pbb V(t)}_{H^k(\Sc)}  \lesssim t^{\frac{1}{2}-\zc} 
\end{gather*}
for $t\in (0,1]$.
\item If, additionally, the initial data $\mathring{V}$ is chosen so that the constraint \eqref{idataconstraint} is satisfied, then the solution $V$
determines a unique classical solution $\ub^K \in C^2(\Mb_{r_0})$, with $r_0=\rho_0^m$, of the IVP
\begin{align*}
\gb^{\alpha\beta}\nablab_\alpha \nablab_\beta \ub^K &= \ab^{K\alpha\beta}_{IJ}\nablab_\alpha \ub^I \nablab_\beta \ub^J
\quad \text{in $\Mb_{r_0}$,}\\
(\ub^K, \del{\tb}\ub^K) &= (\vb^K,\wb^K) \hspace{1.5cm} \text{in $\Sigmab_{r_0}$,}
\end{align*}
where $\ub^K$, $\vb^K$ and $\wb^K$ are determined from $V$ by \eqref{VK2ubK}, \eqref{idatatransB} and \eqref{idatatransC}. Furthermore, the $\ub^K$ satisfy the pointwise bounds
\begin{equation*}
|\ub^K|\lesssim \frac{\rb}{\rb^2-\tb^2}\biggl(1-\frac{\tb}{\rb}\biggr)^{1-\zc} \quad \text{in $\Mb_{r_0}$.}
\end{equation*} 
\end{enumerate}
\end{cor}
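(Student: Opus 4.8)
The plan is to deduce Corollary \ref{existcor} from Theorem \ref{existthm} and Proposition \ref{asympprop}, the only content being a choice of parameters and some bookkeeping of exponents; no new analysis is needed. First, because the bounded weak null condition is in force, Proposition \ref{asympprop} supplies a radius $R_0\in(0,\Rc_0)$ for which the asymptotic flow $\Fsc$ satisfies the flow assumptions \eqref{flowassump.1}--\eqref{flowassump.2} for every $\ep\in(0,1/10]$ and every order $\Ntt\in\Zbb_{\geq 0}$; in particular these hold with $\Ntt$ taken to be any integer $\geq k$, which is exactly what Theorem \ref{existthm} demands of the flow. We are therefore free to pick $\ep$ subject only to $\ep\in(0,1/10)$.

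The one point requiring attention is the interplay of the constants. I would exploit the fact that the inequalities \eqref{kappa-nu-fix1} permit $\ep$ and $\kappa$ to be taken arbitrarily small while $\kappa+\nu$ is taken arbitrarily close to $1/2$ (they require only $2\ep<\kappa\leq 1/3$, $\kappa+\nu<1/2-\ep$ and $\ep<2\nu$). So, given the fixed $\zc>0$ of the corollary, I would choose $\ep$, and then $\kappa,\nu\in\Rbb_{>0}$ obeying \eqref{kappa-nu-fix1}, so that
\[
\ep\leq\zc,\qquad \kappa\leq\zc,\qquad \nu\geq\frac{1}{2}-\zc,
\]
which an elementary check shows is possible for $\ep$ sufficiently small; I would then fix the auxiliary constant $\zc_\ast\in(0,\kappa)$ arbitrarily (it is the constant denoted $\zc$ in the hypotheses of Theorem \ref{existthm}, renamed here to avoid clashing with the corollary's $\zc$), say $\zc_\ast=\kappa/2$. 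Note that with these choices $\frac{1}{2}+\nu+\kappa-\zc_\ast\geq 1-\zc$ holds automatically, since $\nu\geq\frac{1}{2}-\zc$ and $\kappa>\zc_\ast$. Theorem \ref{existthm} now applies with the parameters $(\ep,\kappa,\nu,\zc_\ast)$ and produces, for suitably small initial data $\mathring V\in H^{k+1}(\Sc,\Vbb^N)$, the unique solution $V$ of \eqref{MwaveJ.1}--\eqref{MwaveJ.2} of the stated regularity, together with the bounds of Theorem \ref{existthm}(a)--(b).

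It remains to read off the coarser estimates of Corollary \ref{existcor}. Since $t\in(0,1]$ we have $t^a\leq t^b$ whenever $a\geq b$, so the Theorem's bounds $\norm{V_0(t)}_{H^k(\Sc)}\lesssim t^{-\ep}$ and $\norm{\Dc V(t)}_{H^k(\Sc)}\lesssim t^{-\kappa}$ yield $\norm{V_0(t)}_{H^k(\Sc)}+\norm{\Dc V(t)}_{H^k(\Sc)}\lesssim t^{-\zc}$, the bound $\norm{\Pbb V(t)}_{H^k(\Sc)}\lesssim t^{\nu}$ yields $\norm{\Pbb V(t)}_{H^k(\Sc)}\lesssim t^{\frac{1}{2}-\zc}$, and $\norm{V_0(t)}_{L^\infty(\Sc)}\lesssim 1$ is retained unchanged; this is part (a). For part (b), Theorem \ref{existthm}(b) already gives, under the constraint \eqref{idataconstraint}, the classical solution $\ub^K\in C^2(\Mb_{r_0})$ of the stated IVP with $\ub^K,\vb^K,\wb^K$ produced via \eqref{VK2ubK}, \eqref{idatatransB} and \eqref{idatatransC}; and since $1-\tb/\rb\in(0,1)$ throughout $\Mb_{r_0}$ and $\frac{1}{2}+\nu+\kappa-\zc_\ast\geq 1-\zc$, the pointwise bound $|\ub^K|\lesssim \frac{\rb}{\rb^2-\tb^2}\bigl(1-\tb/\rb\bigr)^{\frac{1}{2}+\nu+\kappa-\zc_\ast}$ of the Theorem improves to the claimed $|\ub^K|\lesssim \frac{\rb}{\rb^2-\tb^2}\bigl(1-\tb/\rb\bigr)^{1-\zc}$.

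As for where the difficulty lies: there is essentially none of analytic substance, since all of the hard work — the Fuchsian energy estimates, local existence and continuation, the Cauchy-stability step used to reach the singular time $t=0$, and the extraction of the decay rates — is packaged inside Theorem \ref{existthm}, while the verification that the bounded weak null condition implies the flow hypotheses \eqref{flowassump.1}--\eqref{flowassump.2} is carried out in Proposition \ref{asympprop}. The only thing to get right is the elementary observation that within \eqref{kappa-nu-fix1} one may simultaneously send $\ep,\kappa\to 0^+$ and $\kappa+\nu\to 1/2$ from below, which is precisely what converts the exponents $-\ep,-\kappa,\nu$ and $\frac{1}{2}+\nu+\kappa-\zc_\ast$ appearing in Theorem \ref{existthm} into the cruder exponents $-\zc,-\zc,\frac{1}{2}-\zc$ and $1-\zc$ required by Corollary \ref{existcor}.
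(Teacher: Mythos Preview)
Your proposal is correct and follows essentially the same approach as the paper: invoke Proposition \ref{asympprop} to verify the flow assumptions, then select parameters $\ep,\kappa,\nu$ (and the auxiliary $\zc_\ast$) compatible with \eqref{kappa-nu-fix1} and apply Theorem \ref{existthm}, finally weakening the resulting exponents to the cruder ones in the corollary. The only difference is cosmetic: the paper fixes a concrete one-parameter family $\zc_{\text{Thm}}=\ep$, $\kappa=3\ep$, $\nu=\tfrac12-5\ep$ with $\ep$ taken small relative to the corollary's $\zc$, whereas you describe the admissible region of parameters abstractly and pick any point in it---your exponent bookkeeping is in fact more explicit than the paper's.
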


\begin{proof}
By Proposition \ref{asympprop}, we know that the asymptotic flow
satisfies the flow assumptions \eqref{flowassump.1}-\eqref{flowassump.2} for some $R_0>0$ and any $\ep \in (0,1/10]$. Fixing $\ep \in (0,1/11)$,
we set $\zc= \ep$, $\nu = \frac{1}{2}-5\zc$ and  $\kappa = 3\zc$. 
It is then not difficult to verify that these choices for $\zc$, $\nu$ and $\kappa$ satisfy the inequalities \eqref{kappa-nu-fix1}
and  $0<\zc < \kappa$. The proof now follows directly from Theorem \ref{existthm}.
\end{proof}

\bigskip

\noindent \textit{Acknowledgements:}
This work was partially supported by the Australian Research Council grant DP170100630. 
J. A. Olvera-Santamar\'{i}a  also acknowledges support from the CONACYT grant 709315.   
\appendix

\section{Indexing conventions\label{indexing}}
Below is a summary of the indexing conventions that are employed throughout this article:
\medskip
\begin{center}
\begin{tabular}{|l|c|c|l|} \hline
Alphabet & Examples & Index range & Index quantities  \\ \hline
Lowercase Greek & $\mu,\nu,\gamma$ & $0,1,2,3$ & spacetime coordinate components, e.g. $(x^\mu)=(t,r,\theta,\phi)$ \\ \hline
Uppercase Greek &$\Lambda,\Sigma,\Omega$ & $2,3$, & spherical coordinate components, e.g. $(x^\Lambda)=(\theta,\phi)$ \\ \hline
Lowercase Latin &$i,j,k $ & $1,2,3$ & spatial coordinate components, e.g. $(y^i)=(\rho,\theta,\phi)$ \\ \hline
Uppercase Latin &$I,J,K$ & $1$ to $N$ & wave equation indexing, e.g. $u^I$  \\ \hline
Lowercase Calligraphic &$\qc,\pc,\rc$ & 0,1 & time and radial coordinate components, e.g. $(x^\qc)=(t,r)$\\ \hline
Uppercase Calligraphic &$\Ic,\Jc,\Kc$ & 0,1,2,3,4 & first order wave formulation indexing, e.g. $V^K_\Ic$  \\ \hline
\end{tabular}
\end{center}

\section{Conformal Transformations\label{ctrans}}
In this section, we recall a number of formulas that govern the transformation laws for geometric objects under a conformal transformation that will be needed for our application to wave equations. Under a  
conformal transformation of the form
\begin{equation} \label{gtrans}
\gt_{\mu\nu} = \Omega^2 g_{\mu\nu},
\end{equation}
the Levi-Civita connection $\nablat_\mu$ and $\nabla_\mu$ of $\gt_{\mu\nu}$ and $g_{\mu\nu}$, respectively, are related by
\begin{equation*} 
\nablat_{\mu}\omega_\nu = \nabla_\mu\omega_\nu - \Cc_{\mu\nu}^\lambda \omega_\lambda,
\end{equation*}
where 
\begin{equation*}
\Cc_{\mu\nu}^\lambda = 2\delta^\lambda_{(\mu}\nabla_{\nu)}\ln(\Omega)
-g_{\mu\nu}g^{\lambda\sigma}\nabla_\sigma \ln(\Omega).
\end{equation*}
Using this, it can be shown that the wave operator transforms as
\begin{equation}\label{wavetransA}
\gt^{\mu\nu}\nablat_\mu\nablat_\nu \ut - \frac{n-2}{4(n-1)}\Rt \ut = \Omega^{-1-\frac{n}{2}}
\biggl(g^{\mu\nu}\nabla_\mu\nabla_\nu u - \frac{n-2}{4(n-1)}R u\biggr)
\end{equation}
where $\Rt$ and $R$ are the Ricci curvature scalars of $\gt$ and $g$, respectively, $n$ is the dimension of
spacetime, and
\begin{equation} \label{utransA}
\ut = \Omega^{1-\frac{n}{2}}u.
\end{equation} 
Assuming now that the scalar functions $\ut^K$ satisfy the system of wave equations
\begin{equation} \label{wavetransB}
\gt^{\mu\nu}\nablat_\mu\nablat_\nu \ut^K - \frac{n-2}{4(n-1)}\Rt \ut^K =\ft^K,
\end{equation}
it then follows immediately from \eqref{wavetransA} and
\eqref{utransA} that the scalar functions
\begin{equation} \label{utrans}
u^K = \Omega^{\frac{n}{2}-1}\ut^K
\end{equation}
satisfy the conformal system of wave equations given by
\begin{equation} \label{wavetransC}
g^{\mu\nu}\nabla_\mu\nabla_\nu u^K - \frac{n-2}{4(n-1)} R u^K =f^K
\end{equation}
where
\begin{equation} \label{ftransA}
f^K = \Omega^{1+\frac{n}{2}}\ft^K.
\end{equation}
Specializing to source terms $\ft^K$ that are quadratic in the derivatives, that is, of the form
\begin{equation} \label{ftransB}
\ft^K = \at^{K\mu\nu}_{IJ}\nablat_\mu \ut^I \nablat_\nu \ut^J,
\end{equation}
a short calculation using \eqref{gtrans} and \eqref{utrans} shows that the corresponding conformal source $f^K$,
defined by \eqref{ftransA}, is given by
\begin{align} 
f^K = \at^{K\mu\nu}_{IJ}\biggl(\Omega^{3-\frac{n}{2}}\nabla_\mu u^I\nabla_\nu u^J& +\bigg(\frac{n}{2}-1\biggr)\Omega^{4-\frac{n}{2}}\bigl(\nabla_\mu\Omega^{-1} u^I \nabla_\nu u^J+ \nabla_\mu u^I \nabla_\nu\Omega^{-1} u^J \bigr) \notag\\
&\qquad  + \bigg(1-\frac{n}{2}\biggr)^2  \Omega^{5-\frac{n}{2}}\nabla_\mu\Omega^{-1} \nabla_\nu\Omega^{-1} u^I u^J\biggr).\label{ftransC}
\end{align}

\bibliographystyle{amsplain}

\providecommand{\bysame}{\leavevmode\hbox to3em{\hrulefill}\thinspace}
\providecommand{\MR}{\relax\ifhmode\unskip\space\fi MR }
\providecommand{\MRhref}[2]{%
  \href{http://www.ams.org/mathscinet-getitem?mr=#1}{#2}
}
\providecommand{\href}[2]{#2}

\end{document}